\newtheorem{theorem}{Theorem}[section]
\newtheorem{lemma}[theorem]{Lemma}
\newtheorem{proposition}[theorem]{Proposition}
\newtheorem{corollary}[theorem]{Corollary}
\theoremstyle{definition}
\newtheorem{assumption}[theorem]{Assumption}
\newtheorem{definition}[theorem]{Definition}
\theoremstyle{remark}
\newtheorem{remark}[theorem]{Remark}
\def\scal#1{\langle #1 \rangle}
\newcommand\bH{\mathbb{H}}
\newcommand\bone{\mathbf{1}}
\newcommand\cR{\mathcal{R}}
\newcommand\cK{\mathcal{K}}
\newcommand\cD{\mathcal{D}}
\newcommand\cC{\mathcal{C}}
\newcommand\cS{\mathcal{S}}
\newcommand\cM{\mathcal{M}}
\newcommand\cF{\mathcal{F}}
\newcommand\scB{\mathcal{B}}
\newcommand\scP{\mathcal{P}}
\newcommand\frH{\mathfrak{H}}
\def\eps{\varepsilon}
\newcommand{\R}{\mathbb{R}}
\newcommand{\E}{\mathbb{E}}
\begin{document}
\title{Boundary regularity of stochastic PDEs}
\author{M\'at\'e Gerencs\'er}
\institute{IST Austria, \email{mate.gerencser@ist.ac.at}}
\maketitle
\begin{abstract}
The boundary behaviour of solutions of stochastic PDEs with Dirichlet boundary conditions can be surprisingly - and in a sense, arbitrarily -  bad: 
as shown by Krylov \cite{K_Brown}, for any $\alpha>0$ one can find a simple $1$-dimensional constant coefficient linear equation whose solution at the boundary is not $\alpha$-H\"older continuous.

We obtain a positive counterpart of this: under some mild regularity assumptions on the coefficients, solutions  of semilinear SPDEs on $\cC^1$ domains are proved to be $\alpha$-H\"older continuous up to the boundary with \emph{some} $\alpha>0$.
\end{abstract}

\tableofcontents
\section{Introduction}

We consider semilinear stochastic partial differential equations (SPDEs) on domains (where the assumptions
and precise understanding of the equation is postponed to Section \ref{sec:Formulation}) of the type
\begin{equs}[eq:main]
        d u&=(a^{ij}D_iD_j u+f(u,\nabla u))\,dt+(\sigma^{ik}D_i u+g^k(u))\,dW^k_t\quad & \text{on } &\R_+\times G,\\
        u&=0, & \text{on } &\R_+\times\partial G,\\
        u_0&=\psi&\text{on }& G,
\end{equs}
with the Einstein summation in place. 
The well-posedness in the variational sense of a large class of such equations is known since the 70's (\cite{Pardoux1}, \cite{KR_SEE81}),
 and interior regularity (at least for the linear ones) results are available from the 90's, starting from \cite{K_W2m}, which initiated a series of works, see among others
  \cite{KL_line}, \cite{KL_hspace}, \cite{Lot_Dirichlet}, \cite{Lot_Degen}, \cite{Kim_Lp}, \cite{Kim_Divergence}, see also \cite{Flan_Compatib} for another approach.
Concerning boundary regularity, while the above works give some partial results, the theory is much less satisfactory.
Even in the linear case, the rather natural question whether the solution is continuous up to the boundary
(and therefore whether the boundary condition is actually satisfied in the classical sense)
 has remained in general unanswered, no matter how smooth the coefficients and the boundary of the domain are,
``\emph{[becoming] a major challenge for the theory}'' according to Krylov \cite{K_Survey}. 
Part of the reason why analysing solutions near the boundary is problematic is the fact that the boundary behaviour is indeed quite bad, as illustrated by the following result. Recall that if in the formulation below the coefficient in the noise were greater than $\sqrt{2}$, then the equation would become ill-posed.
\begin{theorem}[\cite{K_Brown}]
There exists a $\lambda_0>0$ such that if $0<\lambda<\lambda_0$, $\psi\in\cC^\infty_0(\R_+)$ is non identically 0, and $u$ denotes the solution of 
\begin{equs}
        d u&=D^2u\,dt+\sqrt{2-\lambda}D u\,dW_t\quad & \text{on } &\R_+\times \R_+,\\
        u&=0, & \text{on } &\R_+\times\{0\},\\
        u_0&=\psi&\text{on }& \R_+,
\end{equs}
then almost surely there exists a dense subset
$S\subset\R_+$ such that for all $s\in S$ and $\alpha>e^{-\tfrac{1}{2\lambda}}$
\begin{equ}
\lim_{x\downarrow0}u_s(x)x^{-\alpha}=\infty.
\end{equ}
\end{theorem}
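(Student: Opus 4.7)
The plan is to reduce the SPDE to a deterministic heat equation on a random moving domain, and then analyse the boundary behaviour of this PDE via a probabilistic representation. Setting $v(t, y) := u(t, y - \sqrt{2-\lambda}\, W_t)$ on $\{y > \sqrt{2-\lambda}\, W_t\}$, the It\^{o}--Wentzell formula yields
\begin{equation*}
\partial_t v = \tfrac{\lambda}{2}\, \partial_y^2 v \ \text{on}\ \{y > \sqrt{2-\lambda}\, W_t\},\qquad v\bigl(t, \sqrt{2-\lambda}\, W_t\bigr) = 0,\qquad v(0, \cdot) = \psi.
\end{equation*}
The effective diffusion $\lambda/2$ is exactly what remains of the original unit Laplacian after the $d\langle W, W\rangle$ correction from the shift cancels the noise term and subtracts $(2-\lambda)/2$ from the dissipation; this is the marginal stochastic parabolicity made manifest. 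Via $u(s, x) = v(s, x + \sqrt{2-\lambda}\, W_s)$, the H\"{o}lder regularity of $u(s, \cdot)$ at $0$ is equivalent to the rate at which $v(s, \cdot)$ vanishes at the moving boundary point $\sqrt{2-\lambda}\, W_s$ as $y$ approaches it from above.

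For fixed path of $W$, the Feynman--Kac representation of the SPDE, with $B$ an auxiliary Brownian motion independent of $W$, gives
\begin{equation*}
u(s, x) = \E^{B}\bigl[\psi\bigl(x + \sqrt{2-\lambda}\, W_s + \sqrt{\lambda}\, B_s\bigr)\, \mathbf{1}_{x + \sqrt{2-\lambda}\, W_r + \sqrt{\lambda}\, B_r > 0\ \forall\, r \in [0, s]}\bigr],
\end{equation*}
which is equivalent (by time-reversal of $W$ on $[0,s]$) to a backward Feynman--Kac representation of $v$ on the moving domain. Establishing $\lim_{x \downarrow 0} u(s, x)\, x^{-\alpha} = \infty$ thus reduces to a lower bound, at times $s \in S$, on this survival probability that beats $x^{\alpha}$.

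The set $S(\omega)$ will be constructed as the random dense set of times $s$ at which the driving path $r \mapsto W_r$ on $[0, s]$ exhibits a sufficiently favourable profile (for instance, a persistent upward excursion on a small time window just before $s$), which loosens the constraint on $B$ and inflates the survival probability well beyond the generic $O(x)$ size. Density of $S(\omega)$ almost surely will be obtained by a Borel--Cantelli argument on dyadic scales, in the spirit of the construction of fast points of Brownian motion. The hardest step will be the quantitative matching that yields the precise exponent $e^{-1/(2\lambda)}$: one has to trade the (Gaussian-tail) probabilistic cost of producing the required favourable profile of $W$ against the resulting improvement in the survival probability of $B$, with the ratio $(2-\lambda)/\lambda$ of noise variance to effective diffusion as the controlling parameter, and then combine this with the density estimate in a single uniform bound. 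The reduction and Feynman--Kac representation above are, by comparison, essentially routine.
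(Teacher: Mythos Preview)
This theorem is not proved in the present paper at all: it is quoted from Krylov's article \cite{K_Brown} in the introduction, purely as motivation for the positive result (Theorem~\ref{thm:main}) that the paper actually establishes. There is therefore no proof in the paper to compare your proposal against.

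That said, your outline is a sensible reconstruction of the ingredients one would expect in Krylov's argument, and it is worth noting that the It\^o--Wentzell reduction you describe --- shifting by the stochastic characteristic to obtain a deterministic parabolic equation on a random moving domain, then analysing boundary behaviour via a Feynman--Kac/survival-probability representation --- is precisely the mechanism the present paper exploits in the \emph{opposite} direction in Section~\ref{sec:proof} (see the It\^o--Wentzell lemma and the flow $U$ in \eqref{eq:flow U}). So your instincts about the structure are consistent with the techniques here.

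However, as a proof your proposal remains a plan rather than an argument. You correctly identify the decisive step --- matching the Gaussian cost of a favourable excursion of $W$ against the gain in survival probability of $B$ to produce the exponent $e^{-1/(2\lambda)}$ --- but you do not carry it out, and this is where essentially all the content lies. The density of $S$ ``by a Borel--Cantelli argument on dyadic scales, in the spirit of fast points'' is also asserted rather than shown; getting a \emph{deterministic} (or at least full-measure) dense set $S$ that works simultaneously for all $\alpha>e^{-1/(2\lambda)}$ requires care beyond a generic fast-point construction. If you want to complete this, you would need to consult \cite{K_Brown} directly.
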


The main goal of the present article is to prove that solutions of \eqref{eq:main} \emph{are} H\"older-continuous up to the boundary, with some exponent. In light of the above, this exponent of course has to depend on the equation itself, and as we will see, this dependence is in fact only on through a few parameters of the linear part of the equation. Since the precise statement requires a bit of technical setup, we postpone it to the next section, see Theorem \ref{thm:main}. Our proof is inspired by \cite{K_Onemore}, where the particular case of $d=1$, $f=g=\nabla a=\nabla \sigma=0$, was treated. Importantly, unlike the above mentioned `partial' results, its approach relied neither on a `smallness' nor on a `compatibility' condition on $\sigma$.

To our best knowledge the most general well-posedness results for \eqref{eq:main} use the variational theory, which however strongly restricts the growth of $f$. We prove a more general existence and uniqueness result in Theorem \ref{thm:WP}. That itself requires no growth assumption at all on $f(u,\nabla u)$ in $u$, and this allows us to state also Theorem \ref{thm:main} under mild (arbitrary polynomial) growth conditions.

The article is organised as follows. In the following section, after setting up most of the notations, the main result is stated, which is followed by the aforementioned solvability result in Section \ref{sec:WP}, and the rest of the paper is devoted to the proof of Theorem \ref{thm:main}. The proof has four main components: Reducing the problem to equations with linear structure and more regular data, transforming the simplified equation to a PDE with random coefficients on a random domain, establishing certain geometric properties of this random domain, and finally using these properties to prove the appropriate decay at the boundary. Section \ref{sec:proof} is structured according to these steps.

\section{Formulation}\label{sec:Formulation}
Fix a complete filtered probability space $(\Omega,(\cF_t)_{t\geq0},P)$ carrying an infinite sequence of independent Wiener processes $(W^k_t)_{k\in\mathbb{N},\,t\geq0}$. 
The predictable $\sigma$-algebra on $\Omega\times\R_+$ is denoted by $\scP$.
Whenever expectations are taken with respect to a different probability measure $\hat P$, it will be denoted by $\E^{\hat P}$. 
Let us also fix $T>0$.
Given a $d$-dimensional stochastic differential equation (SDE),
\begin{equation}\label{eq:flows explanation}
dX^i_t=\alpha^i(X_t)\,dt+\beta^{ik}(X_t)\,dW^k_t,\quad i=1,2,...,d
\end{equation}
driven by $W$, the corresponding stochastic flow on $[0,T]$ is a continuous random field $(X_{s,t}(x))_{0\leq s\leq t\leq T,x\in\R^d}$ such that for all $s$ and $x$,
 the process $(X_{s,t}(x))_{ s\leq t\leq T}$ is a solution of the equation \eqref{eq:flows explanation} with initial condition $X_{s,s}(x)=x$,
  and that furthermore almost surely for all $0\leq s\leq t\leq v\leq T$ and $x\in\R^d$, the identity $X_{t,v}(X_{s,t}(x))=X_{s,v}(x)$ holds. When the stochastic differential in \eqref{eq:flows explanation} is replaced by the backward It\^o differential $d\overleftarrow{W}_t$, then one can correspondingly talk about the backward flow $(X_{t,s}(x))_{0\leq s\leq t\leq T,x\in\R^d}$. Often it turns out that for any $0\leq s\leq t\leq T$, $X_{s,t}(\cdot)$ is a diffeomorphism from $\R^d$ to itself, in which case one can talk about the inverse flow $(X_{s,t}^{-1}(x))_{0\leq s\leq t\leq T,x\in\R^d}$.

By $B_r(x)$ we understand the $d$-dimensional ball of radius $r\geq0$ around $x\in \R^d$, and for $x=0$ the $x$ argument is often dropped. We denote by $\langle\cdot,\cdot\rangle$ the scalar product in $\R^d$. The distance between two closed sets $A$ and $B$ is denoted by $d(A,B)$. The Borel $\sigma$-algebra on $\R^n$ is denoted by $\scB(\R^n)$.

We fix a bounded $\cC^1$-domain $G\subset\R^d$ (as defined in e.g., \cite{Kim_Lp}), denote $G^c=\R^d\setminus G$, $Q=[0,T]\times G$,
$G^+=G+B_1:=\{x\in\R^d:\,\exists x_1\in G,\,x_2\in B_1:\, x=x_1+x_2\}$,
$Q^+=[0,T]\times G^+$, and for $T_0\geq 0$, $Q_{T_0}=[T_0,T]\times G$. Fix a $\cC^\infty$ function $\Psi$ defined on $G$ such that for all $x\in G$,
\begin{equ}
d(x,\partial G)\leq N\Psi(x)\leq N'd(x,\partial G),\quad d(x,\partial G)^{|k|}|D_k\nabla\Psi(x)|\leq N(k)
\end{equ}
for some constants $N$, $N'$, $(N(k))$, $k$ running over all possible multiindices. For the existence of such function, see e.g. \cite{Lot_WeightedSpaces}.

Derivatives in the direction of the $i$-th unit direction in $\R^d$ are denoted by $D_i$. By $\nabla$ we denote the gradient, with the convention that for $f:\R^d\rightarrow\R^k$, $(\nabla f)^{ij}=D_jf^i$.

For $\gamma\in \R$ and $p\geq 1$, by $H^\gamma_p=H^\gamma_p(G)$ we mean the usual Sobolev spaces,
see e.g. \cite{Triebel}. By $\dot H^\gamma_p$ we mean the closure of $\cC^\infty_0(G)$ in the $H^\gamma_p$ norm. For $\gamma,\theta\in\R$ and $p\geq 1$, by $H^\gamma_{p,\theta}=H^\gamma_{p,\theta}(G)$ we understand weighted Sobolev spaces. An easily accessible definition of them is to first set for $\gamma=n\in\mathbb{N}$,
\begin{equ}\label{eq:weighted spaces}
\|u\|_{H^\gamma_{p,\theta}}^p:=\sum_{i=0}^n\sum_{|\alpha|=i}\int_G|D_{\alpha_1}\cdots D_{\alpha_i} u|^p(x)d(x,\partial G)^{\theta-d+ip}\,dx,
\end{equ}
and then extend this scale of spaces to noninteger and nonnegative values of $\gamma$ by interpolation and duality, respectively. See \cite{Lot_WeightedSpaces} and \cite{K_Traces} for more details, and also for a more intrinsic equivalent definition of these spaces.

H\"older spaces $\cC^\alpha(A)$ on some set $A\subset\R^n$ for $\alpha\in(0,1]$ are defined with the norm
\begin{equ}
\|u\|_{\cC^\alpha(A)}:=\sup_{x\in A}|u(x)|+\sup_{x\neq y\in A}\frac{|u(x)-u(y)|}{|x-y|^\alpha}.
\end{equ}
For $\alpha>0$, $u\in\cC^\alpha$ if all of its $k$-th derivatives, $|k|<\lceil \alpha\rceil$, belong to $\cC^{\alpha+1-\lceil \alpha\rceil}$.

All of the above spaces can easily be extended to $l_2$-valued (or $(l_2)^n$-valued, for that matter) functions, by taking the appropriate operations coordinate-wise and replace the absolute value by the $l_2$-norm. Hence the dimension of the function spaces will not always be detailed - for example, the reader understands that requiring the coefficient $\beta$ of an equation like \eqref{eq:flows explanation} to be of class $\cC^1$ is to require it to be an element of $\cC^1(\R^d,(l_2)^d).$

The understanding of the solution of \eqref{eq:main} is the following.
\begin{definition}
A solution of \eqref{eq:main} is a continuous adapted $L_2$-valued process $u$ that furthermore belongs to $ L_\infty(Q)\cap L_2([0,T],\dot H^1_2(G))$ almost surely, such that
for all $\varphi\in\cC^\infty_0(G)$ the identity 
\begin{equs}[eq:nondiv]
(u_t,\varphi)=(\psi,\varphi)&+\int_0^t(-D_j u_s,a_s^{ij}D_i\varphi)+(f_s(u_s,\nabla u_s)-(D_ia_s^{ij})D_ju_s,\varphi)\,ds
\\&+\int_0^t(\sigma_s^{ik}D_iu_s+g_s^k(u_s),\varphi)\,dW^k_s
\end{equs}
holds almost surely for all $t\in[0,T]$, where $(\cdot,\cdot)$ denotes the $L_2$-inner product.
\end{definition}

Our assumptions for the main result are as follows (in particular, they are more than sufficient to guarantee that all expressions in \eqref{eq:nondiv} make sense).
\begin{assumption}\label{as:coercivity}
There exists a $\kappa>0$ such that for all $(t,\omega,x)\in[0,T]\times\Omega\times(G+B_{1/2}),$
\begin{equ}
\bar a:=a-\tfrac{1}{2}\sigma\sigma^*\geq \kappa I
\end{equ}
holds in the sense of positive semidefinite matrices.
\end{assumption}

\begin{assumption}\label{as:coeff regularity}
(a) The coefficients $a$ and $\sigma$ are $\scP\otimes \scB(\R^d)$-measurable functions that vanish outside $G^+$.
There exist constants $K>0$ and $\nu\in(0,1)$ such that for all $t$ and $\omega$, 
\begin{equ}
\|a_t(\cdot)(\omega)\|_{\cC^{2+\nu}(\R^d)}+\|\sigma_t(\cdot)(\omega)\|_{\cC^{3+\nu}(\R^d)}\leq K.
\end{equ}

(b) There exists a random variable $H$ with finite moments of all order such that for all $\omega$,
\begin{equ}
\|\sigma_\cdot(\cdot)(\omega)\|_{\cC^\nu([0,T],L_\infty(\R^d))}\leq H(\omega).
\end{equ}
\end{assumption}

\begin{assumption}\label{as:nonlinearity}
(a) The function $f(u,\nabla u)$ takes the form $f(u,\nabla u)=\bar f(u)+\nabla\cdot(\hat f(u))$, with $\hat f(0)=0$.
The functions $\bar f$, $\hat f$, and $g$ are $\scP\otimes\scB(\R^d)\otimes\scB(\R)$-measurable, with values in $\R$, $\R^d$, and $l_2$, respectively, that vanish outside $G^+$.
The real-valued function $\psi$ is $\cF_0\otimes\scB(\R^d)$-measurable and vanishes outside $G^+$.
 The function $\bar f_t(x,y)(\omega)$ is continuous in $y\in\R$ uniformly in $t,x,\omega$, and there exists a constant $K>0$ such that
\begin{equs}
(y-y')(\bar f_t(x,y)(\omega)-\bar f_t(x,y')(\omega))&\leq K|y-y'|^2
\\
|\hat f_t(x,y)(\omega)-\hat f_t(x,y')(\omega)|&\leq K|y-y'|
\\
|g_t(x,y)(\omega)-g_t(x,y')(\omega)|&\leq K|y-y'|
\end{equs}
for all $t,x,y,y',\omega$.

(b) There exists a constant $m>0$ such that for all $t,x,y,\omega$, 
\begin{equ}
|\bar f_t(x,y)(\omega)-\bar f_t(x,0)(\omega)|\leq K|y|^m.
\end{equ}
\end{assumption}

\begin{assumption}\label{as:data regularity}
The functions $\psi$, $f^0=f^0_t(x):=\bar f_t(x,0)$, and $g^0:=g_t(x)=g_t(x,0)$ satisfy, for some $\bar\nu>0$ and for all $p\in[2,\infty)$
\begin{equs}
\E\Big(\|\psi\|_{H^{\bar \nu}_p}&+\|f^0\|_{L_{d+4}([0,T],H^{-1+\bar\nu}_{d+4})}
+\|f^0\|_{L_p([0,T],H^{-2+\bar\nu}_{p,d-2+2p})}
\\
&+\|g^0\|_{L_{d+4}([0,T],H^{\bar\nu}_{d+4,d-1/2})}+
\|g^0\|_{L_p([0,T],H^{-1+\bar\nu}_{p,d-2+p})}\Big)^2<\infty.
\end{equs}
\end{assumption}

Let us finally denote $d_1:=\inf\{k\in\mathbb{N}:\,\sigma^{il}_t(x)(\omega)\equiv 0\,\forall l>k\}$.

These assumptions, unless one assumes further control of the growth of $\bar f$ in $u$, are not quite enough to fit in the $L_2$-theory (\cite{Pardoux1}, \cite{KR_SEE81}), 
and in fact as far as the author is aware, no result on well-posedness in this scope is known. In the next section we prove some existence and uniqueness results that well cover the above setting. The main result of the paper then reads as follows.

\begin{theorem}\label{thm:main}
Let Assumptions \ref{as:coercivity} and \ref{as:coeff regularity} hold and suppose $d_1<\infty$. Then there exists an $\alpha=\alpha(\kappa,K,\bar\nu,d,d_1)>0$ such that for any $T_0>0$ and $\psi,f,g$ satisfying Assumptions \ref{as:nonlinearity} and \ref{as:data regularity}, a unique solution $u$ of \eqref{eq:main} exists and
almost surely
\begin{equ}
\sup_{(t,x)\in Q_{T_0}}|u(t,x)|d(x,\partial G)^{-\alpha}<\infty.
\end{equ}
Moreover, for fixed $K,\bar\nu,d,d_1$, there exists a $c_0$ such that for sufficiently small $\kappa$ one has $\alpha>e^{-c_0/\kappa}$.
\end{theorem}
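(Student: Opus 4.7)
\emph{Step 1: Reduction to a linear equation with regular data.} Theorem~\ref{thm:WP} supplies the unique solution $u$; to prove the H\"older bound I would first decompose $u=u^0+u^1$, where $u^0$ solves the linear SPDE obtained by freezing the nonlinearities at the origin, i.e.\ with drift $f^0=f(\cdot,0,0)$ and diffusion $g^0=g(\cdot,0)$. The weighted $L_p$-theory for linear SPDEs on $\mathcal{C}^1$ domains (\cite{Kim_Lp}) combined with the weighted-Sobolev-to-H\"older embedding of \cite{K_Traces} turns Assumption~\ref{as:data regularity} into a pathwise bound $|u^0(t,x)|\le C(\omega)\,d(x,\partial G)^{\alpha_1}$ with some $\alpha_1=\alpha_1(\bar\nu,d)>0$. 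The remainder $u^1$ then solves a semilinear SPDE with zero data whose nonlinearities $f(u,\nabla u)-f^0$ and $g(u)-g^0$ are, by Assumption~\ref{as:nonlinearity}, one-sided Lipschitz in $u$ plus Lipschitz in $\nabla u$, and Lipschitz in $u$, respectively. Factoring the differences as bounded random coefficients multiplying $u$ or $\nabla u$ (the one-sided Lipschitz part is absorbed by $u\mapsto e^{-\lambda t}u$) reduces the problem to a \emph{linear} SPDE of the form \eqref{eq:main} with zero right-hand side and bounded measurable lower-order coefficients.

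\emph{Step 2: Random change of variables.} For this reduced linear equation I would follow \cite{K_Onemore} and apply a random change of variables via the stochastic flow generated by $\sigma$, namely $dY_t^i(x)=\sigma^{ik}(t,Y_t(x))\,dW^k_t$. Under Assumption~\ref{as:coeff regularity} and $d_1<\infty$, Kunita's theory makes $Y_t$ a $\mathcal{C}^{3+\nu}$-diffeomorphism of $\mathbb{R}^d$ with $\|\nabla Y_t^{\pm 1}\|_{\mathcal{C}^{2+\nu}}$ possessing finite moments of every order, uniformly in $t\in[0,T]$. Setting $v:=u\circ Y_t$ (in the appropriate direction) and applying the It\^o--Wentzell formula, the stochastic integrals cancel and $v$ satisfies, pathwise, a deterministic linear parabolic PDE
\[
\partial_t v=\tilde a^{ij}D_iD_j v+\tilde b^iD_iv+\tilde c\,v
\]
on the random moving domain $G_t:=Y_t^{-1}(G)$. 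The new diffusion matrix $\tilde a$ is a Jacobian conjugation of $\bar a=a-\tfrac{1}{2}\sigma\sigma^*$, so Assumption~\ref{as:coercivity} yields $\tilde a\ge\kappa'(\omega)I$ with $1/\kappa'\in L_p(\Omega)$ for every $p$.

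\emph{Step 3: Uniform geometric control of $G_t$.} While $G_t$ is pathwise a $\mathcal{C}^1$ domain, the deterministic estimate below will require \emph{quantitative} bounds: a random radius $r(\omega)>0$ and opening $\eta(\omega)>0$ such that every point of $\partial G_t$ admits a truncated exterior cone of radius $r$ and aperture $\eta$ contained in $G_t^c$, uniformly for $t\in[0,T]$, with $r^{-1},\eta^{-1}\in L_p(\Omega)$ for every $p$. This is the step I expect to be the main obstacle: it requires the uniform spatial $\mathcal{C}^{3+\nu}$-bounds on $\sigma$ together with the time-H\"older control in Assumption~\ref{as:coeff regularity}(b), so that a Kolmogorov-continuity argument for the flow gives a modulus for $\nabla Y_t$ in $(t,x)$ jointly and prevents the cone parameters from collapsing as $t$ varies.

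\emph{Step 4: Pathwise boundary H\"older estimate and conclusion.} Once $G_t$ is under geometric control, the equation for $v$ is, for each $\omega$, a uniformly elliptic non-divergence parabolic equation with bounded measurable coefficients on a domain satisfying a uniform exterior cone condition. Classical Krylov--Safonov / Safonov-type boundary H\"older estimates then yield
\[
|v(t,x)|\le C'(\omega)\,d(x,\partial G_t)^{\alpha_0}
\]
on $[T_0,T]\times G_t$, for some $\alpha_0=\alpha_0(\kappa,K,d)>0$ and $C'(\omega)\in L_p(\Omega)$. Transporting the bound through $Y_t$, which is bi-Lipschitz with integrable constants, replaces $d(x,\partial G_t)$ by a comparable multiple of $d(Y_t(x),\partial G)$, giving the required decay for $u^1$. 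Combined with Step~1 and setting $\alpha:=\min(\alpha_0,\alpha_1)$, this yields the bound in Theorem~\ref{thm:main}, with dependence only on $\kappa,K,\bar\nu,d,d_1$.
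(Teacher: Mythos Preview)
Your overall architecture---reduce to linear, transform by the stochastic flow, then apply deterministic parabolic boundary estimates---is indeed the paper's plan, but there is a genuine gap in Steps~3--4. The spatial exterior cone condition on $G_t$, uniform in $t$, is achievable, but it is \emph{not} what is needed. For a parabolic equation on a time-varying domain $\tilde Q=\{(t,x):x\in G_t\}$, Krylov--Safonov boundary estimates require a \emph{parabolic} exterior cone or measure condition: for boundary points $(t_0,x_0)$ and all small $r$, the parabolic cylinder $[t_0-r^2,t_0]\times B_r(x_0)$ must meet $\tilde Q^c$ in a definite fraction. This forces control of the time-oscillation of $\partial G_t$ at the parabolic scale, i.e.\ $\sup_{|s-t|\le r^2}d_H(\partial G_s,\partial G_t)\lesssim r$. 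But under the flow, $\partial G_t$ moves like a Brownian path, with oscillation of order $\sqrt{r^2\log(1/r)}$ over time $r^2$, so the parabolic cone condition \emph{fails} at infinitely many scales. This is not a technicality: if your argument worked, the resulting $\alpha_0$ would depend only on the ellipticity of $\tilde a$ (i.e.\ of $\bar a$), contradicting Krylov's counterexample quoted at the start of the paper, where the H\"older exponent tends to $0$ as $\sigma\sigma^*\uparrow 2a$ while $\bar a$ stays bounded below.

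The paper's replacement for the missing cone condition is a quantitative ``square root law'' (Lemma~\ref{lem:sqrt}): for a deterministic density $1-\hat\pi(c)$ of dyadic scales $k$, the oscillation of the (normalised) inverse flow over $[t-2^{-k},t]$ is at most $c\,2^{-k/2}$, uniformly in $t$ and in the spatial variable. At each such ``good'' scale one has enough parabolic exterior mass to run a single hitting-probability step (Lemma~\ref{lem:hitting}), yielding a contraction factor $\gamma<1$; iterating only over good scales produces the decay $d(x,\partial G)^{-2\log_2\hat\gamma}$ with $\hat\gamma$ depending on both $\gamma$ and $\hat\pi$. This is precisely how the dependence on $\sigma$ (via $d_1$ and $K$) enters the exponent. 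A smaller point: in Step~1, the weighted $L_p$-theory does \emph{not} give boundary decay for $u^0$ either---Kim's admissible range \eqref{eq:theta} for $\theta$ collapses toward $d-1+p$ as $\sigma\sigma^*\uparrow 2a$, and the embedding \eqref{eq:embed cont} then gives no positive power of $\Psi$. The paper's reduction (Lemma~\ref{lem:simplify}) instead goes the other way: it first proves the hard linear case with regular data and $g=0$ (Theorem~\ref{thm:special}), and then recovers the general statement by interpolation between that and the $L_\infty$ bound from Theorem~\ref{thm:WP}.
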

\begin{remark}\label{rem:elso}
Since $\psi$ is not assumed to vanish at the boundary, one can in general not take $T_0=0$.
Concerning the assumption $d_1<\infty$, one could actually do slightly better with essentially the same argument, see Remark \ref{rem:d1} below.
\end{remark}

\begin{remark}
Assumption \ref{as:data regularity} is somewhat cumbersome. A stronger, but perhaps more tractable condition would be
\begin{equ}\label{eq:data reg2}
\E\Big(\|\psi\|_{H^{\tilde \nu}_{\tilde p}}+\|f^0\|_{L_\infty([0,T],H^{-1+\tilde\nu}_{\tilde p})}+\|\Psi^{-1/(2(d+4))}g^0\|_{L_\infty([0,T],H^{\tilde \nu}_{\tilde p})} \Big)^2<\infty
\end{equ}
with some fixed $\tilde\nu>0$, $\tilde p>d/\tilde\nu$. As one can see from the basic properties of weighted Sobolev spaces (which we recall in Subsection \ref{subsec:simpl}), \eqref{eq:data reg2} implies Assumption \ref{as:data regularity}, with $\bar \nu=\tilde\nu-d/\tilde p$.
One reason why one would not want to impose \eqref{eq:data reg2}, however, is that it assumes some pointwise decay at the boundary from $g^0$, while Assumption \ref{as:data regularity} does not.
\end{remark}

Combining Theorem \ref{thm:main} and some interior regularity, one easily gets the following corollary, which is proved in Subsection \ref{subsec:simpl}.
\begin{corollary}\label{cor}
Assume the setting of Theorem \ref{thm:main} and let $\hat\alpha$ satisfy
\begin{equ}
0<\hat\alpha<\frac{\alpha\bar\nu}{3(\alpha+\bar\nu)}
\end{equ}
Then for any $T_0>0$, the solution $u$ of \eqref{eq:main} belongs to $\cC^{\hat\alpha}(\overline{Q_{T_0}})$ almost surely.
\end{corollary}
\section{Existence and uniqueness of the solution}\label{sec:WP}
First we state the existence result, under some reduced regularity and growth assumptions. Note that we momentarily switch to equations in divergence form, but since in the rest of the article the regularity condition Assumption \ref{as:coeff regularity} on the coefficients will be in place, switching between divergence and non-divergence form equations is harmless. We also remark that for Theorem \ref{thm:WP} one in fact only needs $G$ to be a Lipschitz domain.
\begin{assumption}\label{as:data regularity WP}
The functions $\psi$, $f^0$, and $g^0$ satisfy, for some $\mu>0$, 
\begin{equ}
\cK_{0}:=\|\psi\|_{L_\infty(G)}+\|f^0\|_{L_{d+2+\mu}([0,T],H^{-1}_{d+2+\mu})}+\|g^0\|_{L_{d+2+\mu}(Q)}<\infty
\end{equ}
almost surely.
\end{assumption}
Define also
\begin{equ}
\cK_{1}:=\|\psi\|_{L_2(G)}+\|f^0\|_{L_{2}([0,T],H^{-1}_{2})}+\|g^0\|_{L_{2}(Q)}.
\end{equ}
\begin{theorem}\label{thm:WP}
Let Assumptions \ref{as:coercivity}, \ref{as:nonlinearity} (a), and \ref{as:data regularity WP} hold and assume that $a$ and $\sigma$ are $\scP\otimes \scB(\R^d)$-measurable functions bounded by $K$. 
Then there exists a unique continuous $L_2$-valued adapted process $u$ that furthermore belongs to $ L_\infty(Q)\cap L_2([0,T],\dot H^1_2(G))$ such that
for all $\varphi\in\cC^\infty_0(G)$ the identity 
\begin{equs}[eq:div]
(u_t,\varphi)=(\psi,\varphi)&+\int_0^t(-D_j u_s,a_s^{ij}D_i\varphi)+(f_s(u_s,\nabla u_s),\varphi)\,ds
\\&+\int_0^t(\sigma^{ik}D_iu_s+g_s^k(u_s),\varphi)\,dW^k_s
\end{equs}
holds almost surely for all $t\in[0,T]$. Finally, the estimates
\begin{equs}
\E\|u\|_{L_\infty(Q)}^p&\leq N(\kappa,K,\mu,T,d,G,p)\E\cK_{0}^p,\\
\E\|  u\|_{L_2([0,T],H^{1}_2(G))}^2&\leq N(\kappa,K,T,d,G) \E\cK_{1}^2
\end{equs}
hold with any $p\in(0,\infty)$.
\end{theorem}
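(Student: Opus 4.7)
The argument has three movements: truncate the $y$-variable in $f,g$ to bring the equation into the classical variational framework, establish the two claimed a priori bounds \emph{uniformly} in the truncation level, and finally remove the truncation using the $L_\infty$ bound and settle uniqueness by monotonicity.

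\textbf{Truncation and variational existence.} Fix $R>0$ and let $\phi_R\in\cC^\infty(\R)$ be a nondecreasing $1$-Lipschitz cutoff with $\phi_R(y)=y$ for $|y|\leq R$ and $|\phi_R|\leq R+1$. Set $f_R(t,x,y,z):=f(t,x,\phi_R(y),z)$ and $g_R(t,x,y):=g(t,x,\phi_R(y))$. Since $f$ is uniformly continuous in $y$, one has $|f_R(t,x,y,z)|\leq |f^0(t,x)|+K|z|+C_R$, and by Lipschitz $|g_R(t,x,y)|\leq |g^0(t,x)|+K(R+1)$, so the truncated drift and noise have linear growth in $(u,\nabla u)$. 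A short case analysis (exploiting that $\phi_R$ is $1$-Lipschitz and nondecreasing) shows that $(y-y')(f_R(y,z)-f_R(y',z))\leq K|y-y'|^2$ and $|g_R(y)-g_R(y')|\leq K|y-y'|$ are inherited. In the Gelfand triple $\dot H^1_2(G)\hookrightarrow L_2(G)\hookrightarrow H^{-1}_2(G)$ the standard monotonicity theory \cite{KR_SEE81,Pardoux1} then yields a unique variational solution $u_R\in C([0,T],L_2)\cap L_2([0,T],\dot H^1_2)$. Applying It\^o's formula to $\|u_R\|^2_{L_2}$ and using Assumption~\ref{as:coercivity} to merge $-2(a\nabla u_R,\nabla u_R)$ with $\|\sigma\nabla u_R\|^2$ into a dissipative $-2\kappa\|\nabla u_R\|^2$, then dispatching the $f,g$ contributions by monotonicity, Lipschitz, Young, and Gronwall, gives the $L_2([0,T],H^1_2)$ estimate with constant independent of $R$.

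\textbf{The uniform $L_\infty$ bound -- main obstacle.} I aim for
\begin{equ}
\E\|u_R\|_{L_\infty(Q)}^p\leq N(\kappa,K,\mu,T,d,G,p)\,\E\cK_0^p
\end{equ}
by a Moser-De Giorgi iteration adapted to the stochastic parabolic setting. For $q\geq 2$, It\^o's formula for $\int_G|u_R|^q\,dx$ combined with integration by parts produces $-q(q-1)\int_G|u_R|^{q-2}(a\nabla u_R,\nabla u_R)\,dx$; together with the It\^o correction $\tfrac{q(q-1)}{2}\int_G|u_R|^{q-2}(\sigma\sigma^*\nabla u_R,\nabla u_R)\,dx$ this produces, via Assumption~\ref{as:coercivity}, a dissipation of the form $-\tfrac{4(q-1)}{q}\kappa\int_G|\nabla(|u_R|^{q/2})|^2\,dx$. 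The nonlinear contributions are absorbed into this dissipation plus terms in the data $f^0,g^0$ and lower order by Assumption~\ref{as:nonlinearity}(a); crucially the constants depend on $K$ but \emph{not on $R$}. The parabolic embedding $L_q([0,T],\dot H^1_2)\cap L_\infty([0,T],L_q)\hookrightarrow L_{q(d+2)/d}(Q)$ yields a gain from $L_q$ to $L_{q(d+2)/d}$, and iterating from $q_0=d+2+\mu$ (the exponent appearing in $\cK_0$) reaches $L_\infty$ in finitely many steps, with constants controlled via the geometric iteration. This is the main technical difficulty because the standard Moser scheme is set up for linear equations; here the one-sided Lipschitz in $y$ and Lipschitz in $z$ must substitute for linearity at every stage of the iteration, uniformly in $R$.

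\textbf{Removing the truncation and uniqueness.} Since $\cK_0<\infty$ a.s., the uniform bound gives $\|u_R\|_{L_\infty(Q)}\leq N\cK_0$ a.s.; on the almost sure event $\{N\cK_0<R\}$ the cutoff $\phi_R$ is never active and $u_R$ solves the original \eqref{eq:div}. By the uniqueness below these solutions are consistent, so $u:=\lim_R u_R$ (eventually constant in $R$) solves \eqref{eq:div} and satisfies both claimed estimates. For uniqueness, let $u,v$ be two solutions and $w=u-v$; the $L_\infty(Q)\cap L_2([0,T],\dot H^1_2)$ regularity justifies It\^o's formula for $\|w\|^2_{L_2}$, producing
\begin{equ}
d\|w\|^2=\bigl[-2(a\nabla w,\nabla w)+2(f(u,\nabla u)-f(v,\nabla v),w)+\|\sigma\nabla w+g(u)-g(v)\|^2\bigr]\,dt+dM_t.
\end{equ}
Coercivity plus monotonicity in $y$, Lipschitz in $z$, Lipschitz of $g$, and Young on the cross term $2(\sigma\nabla w,g(u)-g(v))$ reduce this to $d\E\|w\|^2\leq C\E\|w\|^2\,dt$; Gronwall yields $w\equiv 0$.
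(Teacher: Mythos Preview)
Your strategy matches the paper's: truncate in $y$, invoke the variational theory \cite{KR_SEE81} for the truncated problem, run Moser's iteration (the paper refers to \cite{DG15}) to get a truncation-independent $L_\infty$ bound, remove the truncation, and settle uniqueness by monotonicity. The energy estimates and the uniqueness argument are essentially identical to the paper's.

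There is, however, a genuine gap in your ``removing the truncation'' step. From the moment inequality $\E\|u_R\|_{L_\infty(Q)}^p \leq N\,\E\cK_0^p$ you \emph{cannot} conclude the pathwise bound $\|u_R\|_{L_\infty(Q)} \leq N\cK_0$ almost surely; a moment inequality between two random variables does not imply a pointwise one (and $\cK_0$ is $\cF_T$-measurable, so naive localisation on $\{\cK_0\leq M\}$ is not available either). Hence your claim that ``on $\{N\cK_0 < R\}$ the cutoff is never active'' is unjustified. The paper closes this gap differently: it uses an asymmetric two-parameter truncation $f^{(n,m)}(y) = f(-n \vee y \wedge m)$ and the comparison principle \cite[Thm~3.3]{DGy14} to obtain that $u^{(n,m)}$ is \emph{monotone} in $n$. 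A monotone sequence with uniformly bounded moments is almost surely bounded (monotone convergence), so once $n$ exceeds a random threshold the lower truncation is inactive and $u^{(n,m)}$ is eventually constant in $n$; the argument is then repeated for $m$. The comparison principle is precisely the device that converts the moment bound into the required pathwise control --- this is the missing idea in your outline.

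A minor point: truncating $g$ is unnecessary, since $g$ is already globally Lipschitz in $y$ by Assumption~\ref{as:nonlinearity}~(a); the paper only truncates $f$.
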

\begin{proof}
The proof closely follows those of Theorems 2.1-5.2 in \cite{DG15} and (as, in fact, indicated therein)
one needs only make sure that the nonlinear terms do not change anything essential.
We therefore do not aim to repeat the whole argument, but rather will only detail the verification of this.

Define for $n,m\in\mathbb{N}$,
\begin{equ}
\bar f^{(n,m)}_t(x,y):=\bar f_t(x,-n\vee y\wedge m),\quad f^{(n,m)}(u,\nabla u)=\bar f^{(n,m)}(u)+\nabla\cdot(\hat f(u)).
\end{equ}
Since $\bar f^{(n,m)}$ has linear growth, the results of \cite{KR_SEE81} apply and hence one has the existence of a unique continuous $L_2$ valued adapted process
$u^{(n,m)}$ which furthermore belongs to $L_2([0,T],\dot H^1_2(G))$ and such that \eqref{eq:div} holds with $u^{(n,m)}$ and $f^{(n,m)}$ in place of $u$ and $f$, respectively.

Applying It\^o's formula \cite[Lem~3.2]{DG15} to $\|u_t^{(n,m)}\|_{L_p(G)}^p$, $p\geq2$, one gets
\begin{equs}[eq:ito]
\int_G&|u^{(n,m)}_t|^p\,dx=\int_G|\psi|^p\,dx
\\
&+\int_0^t\int_Gp|u_s^{(n,m)}|^{p-2}u_s^{(n,m)}(\sigma^{ik}_sD_iu_s^{(n,m)}+g_s^k(u_s^{(n,m)}))\,dx\,dW_s^k
\\
&+\int_0^t\int_G -p(p-1)|u_s^{(n,m)}|^{p-2}D_iu_s^{(n,m)}a^{ij}_sD_ju_s^{(n,m)}
\\
&\quad\quad+p|u_s^{(n,m)}|^{p-2}u_s^{(n,m)}\bar f^{(n,m)}_s(u_s^{(n,m)})
\\
&\quad\quad-p(p-1)|u_s^{(n,m)}|^{p-2}\nabla u_s^{(n,m)}\cdot \hat f_s^{(n,m)}(u_s^{(n,m)})
\\
&\quad\quad+(1/2)p(p-1)|u_s^{(n,m)}|^{p-2}|\sigma^{ik}_sD_iu_s^{(n,m)}+g_s^k(u_s^{(n,m)})|_{l_2}^2\,dx\,ds.
\end{equs}
Looking at the contribution of the nonlinear terms, we can write, by Assumption \ref{as:nonlinearity} (a)
\begin{equs}
u_s^{(n,m)}&\bar f^{(n,m)}_s(u_s^{(n,m)})
\leq K|u_s^{(n,m)}|^2+u_s^{(n,m)}f^0_s.
\end{equs}
Recall that by Assumption \ref{as:data regularity WP}, $f^0=\bar h^0+\nabla\cdot\hat h^0$, where $\bar h^0,\hat h^0\in L_{d+2+\mu}(Q)$ and
$\|\bar h^0\|_{L_{d+2+\mu}(Q)}+\|\hat h^0\|_{L_{d+2+\mu}(Q)}\leq2\cK_0<\infty$. Therefore by the above bounds, integration by parts, and Young's inequality we have, for any $\eps>0$,
\begin{equs}
\int_G&p|u_s^{(n,m)}|^{p-2}u_s^{(n,m)}\bar f^{(n,m)}_s(u_s^{(n,m)})\,dx
\\
&\leq \int_G p^2|u_s^{(n,m)}|^{p-2}\Big(K|u_s^{(n,m)}|^2+\eps|\nabla u_s^{(n,m)}|^2+|u_s^{(n,m)}||\bar h^0_s|+C(\eps)|\hat h^0_s|^2\Big)\,dx,
\end{equs}
for some constant $C(\eps)$ depending only on $\eps$ and $K$.
Next, we have
\begin{equ}
|\nabla u_s^{(n,m)}\cdot \hat f_s^{(n,m)}(u_s^{(n,m)})|\leq
\eps|\nabla u_s^{(n,m)}|^2+C(\eps)|u_s^{(n,m)}|^2,
\end{equ}
allowing one to bound the second to last term in \eqref{eq:ito}
As for the contribution of $g$, one simply has
\begin{equs}
(&2\sigma^{ik}_sD_iu_s^{(n,m)}+g_s^k(u_s^{(n,m)}))g^k_s(u_s^{(n,m)})
\\&=
\Big(2\sigma^{ik}_sD_iu_s^{(n,m)}+(g_s^k(u_s^{(n,m)})-(g_s^0)^k)+(g_s^0)^k\Big)
((g^k_s(u_s^{(n,m)})-(g_s^0)^k)+(g_s^0)^k).
\end{equs}
Therefore by Assumption \ref{as:nonlinearity} (a) we have, for any $\eps>0$,
\begin{equs}
\sum_{k\geq 0}&\int_G(1/2)p(p-1)|u_s^{(n,m)}|^{p-2}(2\sigma^{ik}_sD_iu_s^{(n,m)}+g_s^k(u_s^{(n,m)}))g^k_s(u_s^{(n,m)})\,dx
\\&\leq p^2\int_G C(\eps)|u_s^{(n,m)}|^p+C(\eps)|u_s^{(n,m)}|^{p-2}|g^0|^2+\eps|u_s^{(n,m)}|^2|\nabla u_s^{(n,m)}|^2\,dx.
\end{equs}
All of these are of precisely the same order as the contributions coming from the lower order linear terms in \cite{DG15}. Note also that the constants on the right-hand sides do not depend on $n$ and $m$.
 The resulting energy estimates are therefore virtually identical to the ones in the linear case, and thus so is Moser's iteration. One therefore obtains the bounds
\begin{equ}\label{eq:est1}
\E\|u^{(n,m)}\|_{L_\infty(Q)}^p\leq N\E\cK^p_0
\end{equ}
with $N$ depending on $\kappa$, $K$, $\mu$, $T$, $d$, $G$, $p$ but not on $n$ and $m$.
Also, applying It\^o's formula for $\|u_s\|_{L_2(G)}^2$, by the above and Assumption \ref{as:coercivity} one gets
\begin{equs}
\int_G&|u_T^{(n,m)}|^2\,dx\leq\int_G|\psi|^2\,dx+M_T-2\kappa\int_0^T\int_G|\nabla u_s^{(n,m)}|^2\,dx\,ds
\\&\quad+\int_0^T\int_GC(\eps)(|u_s^{(n,m)}|^2+|\bar h^0_s|^2+|\hat h^0_s|^2+|g^0_s|^2)+\eps|\nabla u_s^{(n,m)}|^2\,dx\,ds
\end{equs}
with some martingale $M$. Hence one obtains
\begin{equ}\label{eq:est2}
\E\|  u_s^{(n,m)}\|_{L_2([0,T],H^{1}_2(G))}^2\leq N\E\cK_1^2,
\end{equ}
with $N$ having the same dependencies as before, except for $\mu$ and $p$.

Now we let $n\rightarrow\infty$.
By the comparison principle \cite[Thm~3.3]{DGy14}
one has that $ u^{(n,m)}\leq u^{(n',m)}$ for $n'\geq n$, which, thanks to \eqref{eq:est1}, implies that $ u^{(n,m)}$ not only converges as $n\rightarrow\infty$,
but is in fact constant in $n$ after an index $N=N(\omega)$.
This implies that the limit $ u^{(\infty,m)}$ is a solution of \eqref{eq:div} with $f$ replaced by 
\begin{equ}
f^{(\infty,m)}(u,\nabla u)=\bar f^{(\infty,m)}(u)+\nabla\cdot(\hat f(u)),\quad
\bar f^{(\infty,m)}_t(x,y):=\bar f_t(x, y\wedge m),
\end{equ} 
and moreover, $ u_s^{(\infty,m)}$ also satisfies the bounds \eqref{eq:est1}-\eqref{eq:est2}.
One then passes to the $m\rightarrow\infty$ limit similarly, and the limit $u:= u^{(\infty,\infty)}$ is indeed the solution claimed in the theorem.

As for the uniqueness, take two solutions $u$ and $v$ and write It\^o's formula for $\|e\|_{L_2(G)}^2:=\|u-v\|_{L_2(G)}^2$:
\begin{equs}
\int_G |e_t|^2\,dx&=\int_0^t\int_G-\bar 2a_s^{ij}D_ie_sD_je_s+(u_s-v_s)(f_s(u_s,\nabla u_s)-f_s(v_s,\nabla v_s))
\\&\quad+2\sigma_s^{ik}D_i(u_s-v_s)(g_s^k(u_s)-g_s^k(v_s))+|g_s(u_s)-g_s(v_s)|^2\,dx\,ds+m_t
\end{equs}
with some martingale $m$. By Assumption \ref{as:nonlinearity} (a), one has
\begin{equs}
(u-v)&(f(u,\nabla u)-f(v,\nabla v))
&\leq K|u-v|^2+(u-v)\nabla (\hat f(u)-\hat f(v)).
\end{equs}
After integration by parts, using simply the bound $|g(u)-g(v)|\leq K|u-v|$ in the terms involving $g$, and by Assumption \ref{as:coercivity}, we get
\begin{equ}
\int_G|e_t|^2\leq \int_0^t\int_G-2\kappa|\nabla e_s|^2+C|e_s|^2+C'|e_s||\nabla e_s|\,dx\,ds+m_t
\end{equ}
with some constants $C,C'$ depending on $K$. Hence, Young's inequality, taking expectations, and Gronwall's lemma yields
$(\E\|e_t\|^2_{L_2(G)})_{t\in[0,T]}\equiv 0$. Since $e$ is continuous in $L_2(G)$, $(e_t)_{t\in[0,T]}\equiv 0$ almost surely, as required.
\end{proof}

\section{Proof of Theorem \ref{thm:main}}\label{sec:proof}

\subsection{Simplifying}\label{subsec:simpl}
As a first step, we reduce the statement to a version where the equation is linear, $f$ is regular, and $g$ is simply not present.
To do that, however, we need to derive some further properties of the solution of \eqref{eq:main}, based on $L_p$-theory, and so we recall a couple of notations from it.

We somewhat deviate from the standard convention of the literature in terms of the spaces used, in that the integration exponent in time and in $\omega$ may differ (in fact the latter will mostly be 2), hence the slightly different notation.
Set $U^\gamma_{p,\theta,(q)}=L_q(\Omega,\cF_0,\Psi^{1-2/p}H^{\gamma-2/p}_{p,\theta})$ and let $\bH^\gamma_{p,\theta,(q)}$ be the space of $\scP\otimes\scB(G)$-measurable functions belonging to  $L_q(\Omega, L_p([0,T],H^{\gamma}_{p,\theta}))$.
Let furthermore $\frH^\gamma_{p,\theta,(q)}\subset \Psi\bH^\gamma_{p,\theta,(q)}$ consist of functions $u$ for which
there exists a $\psi\in U^\gamma_{p,\theta,(q)}$, $f\in\Psi^{-1}\bH^{\gamma-2}_{p,\theta,(q)}$, and $g\in\bH^{\gamma-1}_{p,\theta,(q)}$, such that for all $\varphi\in\cC^\infty_0(G)$ the identity
\begin{equ}
(u_t(\cdot),\varphi)=(\psi,\varphi)
+\int_0^t (f_s(\cdot),\varphi)\,ds+\int_0^t(g^k_s(\cdot),\varphi)\,dW^k_s
\end{equ}
holds almost surely for all $t\in[0,T]$. We use the norm
\begin{equ}
\|u\|_{\frH^\gamma_{p,\theta,(q)}}=\|\Psi^{-1}u\|_{\bH^{\gamma}_{p,\theta,(q)}}+\|\psi\|_{U^\gamma_{p,\gamma,(q)}}+\|\Psi f\|_{\bH^{\gamma-2}_{p,\theta,(q)}}+\|g\|_{\bH^{\gamma-1}_{p,\theta,(q)}}.
\end{equ}
Let us recall some useful properties of these spaces. First of all, $\Psi^{-\alpha}$ is an isomorphism from $H^\gamma_{p,\theta}$ to $H^\gamma_{p,\theta+\alpha p}$.
The following property, while we did not find explicitly stated elsewhere, follows easily from the definition \eqref{eq:weighted spaces}, interpolation, and duality.
\begin{equs}[eq:easy embed]
H^\gamma_{p,\theta}&\subset H^\gamma_p\quad&&\text{if }\theta\leq d-(\gamma\vee0) p
\\
H^\gamma_{p}&\subset H^\gamma_{p,\theta}\quad&&\text{if }\theta\geq d-(\gamma\wedge0) p
\end{equs}

Finally, invoke from \cite[Thm~4.7]{K_Traces}
that for any $r'\geq r\geq 2$, $\kappa\in[0,1]$, $2/r<\beta\leq 1$, $q\in[0,r]$, $\theta\in\R$, and $\gamma\in\R$, one has the continuous embedding
\begin{equs}\label{eq:embed cont}
\frH^\gamma_{p,\theta,(q)}&\subset \Psi^{1-\gamma+(d-\theta)/p} L_{q}(\Omega,\cC^{\alpha/2-1/p}([0,T],\cC^{\gamma-\beta -d/p}(G))),
\end{equs}
provided 
\begin{equ}
2/p<\alpha<\beta\leq 1,\quad \gamma-\beta-d/p\in(0,1).
\end{equ}

The following is a particular case of the of the quite general $L_p$-theory for SPDEs on domains from \cite[Thm~2.9]{Kim_Lp}.
\begin{theorem}\label{thm:Kim}
Let Assumption \ref{as:coeff regularity} (a) hold, and assume that $f$ and $g$ do not depend on $u$ or $\nabla u$. 
Suppose furthermore that for some $c\in(0,1]$, Assumption \ref{as:coercivity} hold with $\kappa=cK$ and fix $p\geq 2$ and $\theta\in\R$ that satisfy
\begin{equ}\label{eq:theta}
d-1+p[1-\frac{1}{p(1-c)+c}]<\theta<d-1+p.
\end{equ}
Let $q\in[0,p]$, $\gamma\in[0,4]$, and assume $\psi\in U^\gamma_{p,\theta,(q)}$, $f^0\in\Psi^{-1}\bH^{\gamma-2}_{p,\theta,(q)}$, and $g\in\bH^{\gamma-1}_{p,\theta,(q)}$.
Then the solution $u$ of \eqref{eq:main} belongs to $\frH^{\gamma}_{p,\theta,(q)}$
\begin{equ}
\|u\|_{\frH^\gamma_{p,\theta,(q)}}\leq N(\|\psi\|_{U^\gamma_{p,\theta,(q)}}+\|\Psi f\|_{\bH^{\gamma-2}_{p,\theta,(q)}}+\|g\|_{\bH^{\gamma-1}_{p,\theta,(q)}}),
\end{equ}
where $N$ depends on $\kappa$, $K$, $d$, $T$, $G$, $\theta$, $p$, and $q$.
\end{theorem}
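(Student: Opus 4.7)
The plan is to derive Theorem~\ref{thm:Kim} as a specialization of Kim's $L_p$-theory for linear SPDEs on bounded $\cC^1$ domains \cite[Thm~2.9]{Kim_Lp}, together with a stopping-time plus Lenglart argument to cover the full range $q \in [0,p]$ of $\omega$-integrability exponents.

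For the base case $q = p$, I would invoke \cite[Thm~2.9]{Kim_Lp} directly. Since $f$ and $g$ are independent of $u$ and $\nabla u$, the equation is genuinely linear. Assumption~\ref{as:coeff regularity}(a) supplies coefficient regularity ($\cC^{2+\nu}$ on $a$ and $\cC^{3+\nu}$ on $\sigma$) that easily exceeds Kim's requirements, and the vanishing of $a,\sigma$ outside $G^+$ fits the near-boundary structure imposed there. Assumption~\ref{as:coercivity} with $\kappa = cK$ identifies the ratio $c \in (0,1]$, and the window \eqref{eq:theta} for $\theta$ is exactly the admissible range in \cite{Kim_Lp}: the upper bound $d-1+p$ reflects the weighted Sobolev theory near $\partial G$, and the lower bound involving $p[1 - 1/(p(1-c)+c)]$ is Krylov's classical threshold for the stochastic term to be absorbable by the parabolic part. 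The smoothness $\gamma \in [0,4]$ lies within Kim's admissible range, so the theorem yields both existence in $\frH^\gamma_{p,\theta,(p)}$ and the stated linear estimate.

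For general $q \in [0,p)$, I would run the standard localization mentioned in the footnote. Let $X_t$ and $A_t$ be the nondecreasing pathwise processes given by $X_t := \|\Psi^{-1}u\|^p_{L_p([0,t],\,H^\gamma_{p,\theta})}$ and $A_t := \|\psi\|^p_{\Psi^{1-2/p}H^{\gamma-2/p}_{p,\theta}} + \|\Psi f\|^p_{L_p([0,t],\,H^{\gamma-2}_{p,\theta})} + \|g\|^p_{L_p([0,t],\,H^{\gamma-1}_{p,\theta})}$. Applying the $q=p$ estimate to the data truncated at an arbitrary bounded stopping time $\tau$ (which preserves the linear SPDE structure since truncation by $\mathbf{1}_{[0,\tau]}$ respects predictability) yields $\E X_\tau \leq C\,\E A_\tau$ for every such $\tau$. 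Lenglart's inequality \cite[Prop~IV.4.7]{RYor} with exponent $q/p \in (0,1)$ then upgrades this to $\E X_T^{q/p} \leq C_q\,\E A_T^{q/p}$, and subadditivity of $s \mapsto s^{q/p}$ on the right-hand side delivers exactly the $\frH^\gamma_{p,\theta,(q)}$-estimate claimed.

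The main obstacle is purely bookkeeping: matching this paper's weighted-space notation with Kim's, recognising that the weight window \eqref{eq:theta} is exactly the admissible range in \cite{Kim_Lp}, and verifying that $X_t,A_t$ are genuinely nondecreasing so Lenglart applies and that the stopped linear problem still sits inside Kim's framework. All of these are routine, and no analytical input beyond the references to \cite{Kim_Lp} and \cite{RYor} is required.
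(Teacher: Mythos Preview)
Your proposal is correct and matches the paper's own treatment: the paper does not prove Theorem~\ref{thm:Kim} but simply cites \cite[Thm~2.9]{Kim_Lp} for the case $q=p$ and remarks in a footnote that the extension to $q<p$ follows from Lenglart's inequality \cite[Prop~IV.4.7]{RYor}, which is exactly the two-step argument you spell out.
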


\begin{remark}
Notice that \eqref{eq:theta} is always satisfied if $d-2+p\leq\theta<d-1+p$.
\end{remark}
\begin{remark}
Both \eqref{eq:embed cont} and Theorem \ref{thm:Kim} are actually only stated in the references for the $q=p$ case. However, one can easily deduce the $q<p$ case using Lenglart's inequality, see \cite[Prop~IV.4.7]{RYor}.
\end{remark}

\begin{theorem}\label{thm:intcont}
Let Assumptions \ref{as:coercivity}, \ref{as:coeff regularity} (a), \ref{as:nonlinearity}, and \ref{as:data regularity} hold and let $u$ be the solution of \eqref{eq:main} obtained from Theorem \ref{thm:WP}. Then for any $p\in[2,\infty)$, $u\in\frH^{\bar \nu}_{p,d-2+p,(2)}$, and in particular $u$ is a continuous random field in $Q$.
\end{theorem}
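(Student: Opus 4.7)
The plan is to view the semilinear equation for $u$ as a \emph{linear} SPDE
\begin{equs}
du_t=(a^{ij}D_iD_j u_t+F_t)\,dt+(\sigma^{ik}D_iu_t+G^k_t)\,dW^k_t
\end{equs}
with inhomogeneities $F_t(x):=f_t(x,u_t(x),\nabla u_t(x))$ and $G_t(x):=g_t(x,u_t(x))$, and then apply Theorem \ref{thm:Kim} with target parameters $\gamma=\bar\nu$, $\theta=d-2+p$, $q=2$, and an arbitrary $p\in[2,\infty)$. The key pointwise decompositions provided by Assumption \ref{as:nonlinearity} are
\begin{equs}
|F_t(x)-f^0_t(x)|\leq K|u_t(x)|^m+K|\nabla u_t(x)|,\qquad |G_t(x)-g^0_t(x)|\leq K|u_t(x)|,
\end{equs}
while Theorem \ref{thm:WP} provides $\E\|u\|_{L_\infty(Q)}^q+\E\|\nabla u\|_{L_2(Q)}^2<\infty$ for every $q<\infty$.

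Given these, I would check the input hypotheses of Theorem \ref{thm:Kim} for $\psi$, $F$, and $G$. The ``linear'' parts $\psi$, $f^0$, $g^0$ are controlled directly by Assumption \ref{as:data regularity} after matching powers of $\Psi$ via the isomorphism $\Psi^{-\alpha}\colon H^\gamma_{p,\theta}\to H^\gamma_{p,\theta+\alpha p}$ together with the embeddings \eqref{eq:easy embed}. For the nonlinear remainders, since $\bar\nu-2<0$ and $\bar\nu-1<0$ (WLOG $\bar\nu<1$) and since $d(x,\partial G)^{\theta-d}$ is integrable on $G$ for $\theta>d-1$, the $L_\infty(Q)$ bound on $|u|$ and $|u|^m$ directly places $G-g^0$ and the $|u|^m$-part of $F-f^0$ into the correct weighted negative Sobolev spaces with the required moments.

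The delicate piece is the part of $F-f^0$ bounded by $K|\nabla u|$: placing it in $\Psi^{-1}\bH^{\bar\nu-2}_{p,d-2+p,(2)}$ at large $p$ demands more regularity of $\nabla u$ than Theorem \ref{thm:WP} alone supplies. I expect to close this by a finite bootstrap: first invoke Theorem \ref{thm:Kim} at $p=2$ with a small regularity index (where the bounds $u\in L_\infty(Q)\cap L_2([0,T],\dot H^1_2(G))$ with moments in $\omega$ are enough to verify the inhomogeneity hypotheses), use the resulting membership $u\in\frH^{\gamma_0}_{2,d,(2)}$ to upgrade $\nabla u$ to a weighted Sobolev setting, and iterate, raising at each step either the integrability exponent $p$ or the regularity index $\gamma$, until landing in $\frH^{\bar\nu}_{p,d-2+p,(2)}$ for the prescribed $p$. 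The main obstacle I anticipate is choosing this chain of parameters so that the $\nabla u$-remainder at each round falls within the inhomogeneity space required for the next application; this should be doable because one only needs a fixed positive increment in regularity per step and the weight exponents scale linearly with $p$.

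For the final ``continuous random field'' claim, I would pick $p$ large enough that there exists $\beta$ with $2/p<\beta\leq 1$ and $\bar\nu-\beta-d/p\in(0,1)$, and invoke the embedding \eqref{eq:embed cont}, which places $u$ in $\Psi^{1-\bar\nu+(d-\theta)/p}L_2(\Omega,\cC^{\alpha/2-1/p}([0,T],\cC^{\bar\nu-\beta-d/p}(G)))$. Since the prefactor $\Psi^{1-\bar\nu+(d-\theta)/p}$ is locally bounded on $G$, $u$ admits a jointly Hölder continuous representative on every compact subset of $Q$, which is exactly the asserted continuity.
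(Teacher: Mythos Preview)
Your overall strategy --- freeze the nonlinearities as inhomogeneities and apply Theorem~\ref{thm:Kim} with $\gamma=\bar\nu$, $\theta=d-2+p$, $q=2$ --- is exactly what the paper does, and your treatment of $\psi$, $f^0$, $g^0$, $|u|^m$, $g(u)-g^0$, and the final continuity statement via \eqref{eq:embed cont} matches the paper's argument.

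Where you diverge is in calling the $|\nabla u|$-part of $F-f^0$ ``delicate'' and proposing a multi-step bootstrap to upgrade $\nabla u$. This is unnecessary. The target space for $F$ is $\Psi^{-1}\bH^{\bar\nu-2}_{p,d-2+p,(2)}$, and since $\bar\nu-2<-1$, one only needs $\nabla u$ in a space of regularity $-1$ with integrability $p$ --- not $\nabla u\in L_p$. The paper simply observes that $u\in L_\infty(Q)$ with all moments (from Theorem~\ref{thm:WP}) gives $u\in L_2(\Omega,L_p([0,T],L_p))$ for every $p$, hence $\nabla u\in L_2(\Omega,L_p([0,T],H^{-1}_p))$ by one differentiation, and then runs the embeddings
\[
H^{-1}_p\subset H^{-1}_{p,d+p}\subset \Psi^{-1}H^{-1}_{p,d-2+p}\subset \Psi^{-1}H^{\bar\nu-2}_{p,d-2+p}
\]
via \eqref{eq:easy embed} and the $\Psi^{-\alpha}$ isomorphism. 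Combined with the monotonicity property for negative-order weighted Sobolev spaces (pointwise domination $|f|\le|g|$ with $g\in H^\gamma_{p,\theta}$, $\gamma\le0$, forces $f\in H^\gamma_{p,\theta}$), this places $F-f^0$ directly in the right space for any $p\ge2$, in one shot. Your bootstrap might be made to work, but it is an overcomplication of a step that the paper handles in two lines; the ``obstacle'' you anticipate does not actually exist.
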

\begin{proof}
By Theorem \ref{thm:WP}, and \eqref{eq:easy embed}, one has, for any $p\in[2,\infty)$ 
\begin{equs}
\nabla\cdot \hat f(u) \in L_2(\Omega, L_p([0,T],H^{-1}_{p}))&\subset L_2(\Omega,L_p([0,T],H^{-1}_{p,d+p}))
\\
&\subset L_2(\Omega,L_p([0,T],\Psi^{-1}H^{-1}_{p,d-2+p}))
\\&
\subset\Psi^{-1}\bH^{-2+\bar\nu}_{p,d-2+p,(2)}.
\end{equs}
By similar reasoning and Assumption \ref{as:nonlinearity} (b),
\begin{equ}
(\bar f(u)-f^0)\in L_2(\Omega, L_p([0,T],L_p))\subset
L_2(\Omega, L_p([0,T],H^0_{p,d}))\subset\bH^{-1+\bar \nu}_{p,d-2+p,(2)}.
\end{equ}
Invoking also Assumption \ref{as:data regularity}, one can therefore conclude
that $f(u,\nabla u)\in\Psi^{-1}\bH^{-2+\bar\nu}_{p,d-2+p,(2)}$. 
A similar (in fact, easier) argument  shows that $g(u)\in\bH^{-1+\bar\nu}_{ p,d-2+p,(2)}$.
Also, 
\begin{equ}
\psi\in L_2(\Omega,H^{\bar\nu}_{p})\subset U^{\bar\nu}_{\bar p,d-2+\bar p,(2)}
\end{equ} 
by Assumption \ref{as:data regularity} and \eqref{eq:easy embed}.
Viewing $f(u,\nabla u)$ and $g(u)$ as fixed free terms, we can apply Theorem \ref{thm:Kim},
with $\bar \nu$ in place of $\gamma$, and $2$ in place of $q$, to obtain $u\in\frH^{\bar\nu}_{ p,d-2+ p,(2)}$ as claimed.
The second claim in the theorem follows by simply using \eqref{eq:embed cont}.
\end{proof}
Now that the basic interior regularity is quantified, we can prove Corollary \ref{cor}.

\emph{Proof of Corollary \ref{cor}}.
Let $(t,x),(s,y)\in Q_{T_0}$ and denote $|(t,x)-(s,y)|=\eps$, $d(x,\partial G)\vee d(y,\partial G)=\delta$. From Theorem \ref{thm:intcont} and \eqref{eq:embed cont} we can deduce that
for any $\bar\alpha<\bar\nu/3$ one has
\begin{equ}
|u_t(x)-u_s(y)|\leq \eta_0 \delta^{-\bar\nu}\eps^{\bar\alpha}.
\end{equ}
with some random variable $\eta_0$. Theorem \ref{thm:main}, on the other hand, yields that
\begin{equ}
|u_t(x)-u_s(y)|\leq \eta_1\delta^\alpha
\end{equ}
with some random variable $\eta_1$. If $\delta\geq\eps$, then this already gives the desired H\"older estimate.
Otherwise set $\lambda=\alpha/(\alpha+\bar\nu)\in(0,1)$, and note that combining the two above bounds give
\begin{equ}
|u_t(x)-u_s(y)|\leq\eta_0^\lambda\eta_1^{1-\lambda}\eps^{\lambda\bar\alpha},
\end{equ}
as required.
\qed

Introduce for $ {T_0}\geq0$, $\alpha\geq 0$, the spaces $L_{\infty,\alpha}(Q_{T_0})$, of functions $u\in L_\infty(Q_{T_0})$ such that
\begin{equ}
\|u\|_{L_{\infty,\alpha}(Q_{T_0})}:=\sup_{(t,x)\in Q_{{T_0}}}|u(t,x)|d(x,\partial G)^{-\alpha}<\infty.
\end{equ}
It is easy to check that under the complex interpolation $[\cdot,\cdot]_{\theta}$ (for its definition see e.g. \cite{Triebel}) these spaces behave as expected.
\begin{proposition}
Let $\alpha\neq\alpha'$, $\theta\in(0,1)$, and ${T_0}\geq 0$. Then 
\begin{equ}
L_{\infty,(1-\theta)\alpha+\theta\alpha'}(Q_{T_0})=[L_{\infty,\alpha}(Q_{T_0}),L_{\infty,\alpha'}(Q_{T_0})]_{\theta}.
\end{equ}
\end{proposition}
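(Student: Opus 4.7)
Write $\gamma := (1-\theta)\alpha + \theta\alpha'$. My strategy is the standard complex method applied to weighted $L_\infty$ spaces: exploit the pointwise identity $L_{\infty,\beta}(Q_{T_0}) = \Psi^\beta L_\infty(Q_{T_0})$ (with equivalent norm, via $\Psi \sim d(\cdot,\partial G)$), combined with the analytic family $z \mapsto \Psi^{(\theta-z)(\alpha-\alpha')}$ of multiplication operators, and close with the three-lines lemma. Throughout, let $S = \{z \in \mathbb{C} : 0 < \operatorname{Re} z < 1\}$.

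For the inclusion $L_{\infty,\gamma}(Q_{T_0}) \subset [L_{\infty,\alpha}(Q_{T_0}),L_{\infty,\alpha'}(Q_{T_0})]_\theta$, given $u \in L_{\infty,\gamma}$ I would construct the extension
$$F(z)(t,x) := \Psi(x)^{(\theta-z)(\alpha-\alpha')}\,u(t,x), \qquad z \in \bar S,$$
as a candidate witness. Clearly $F(\theta)=u$, and since $|\Psi^{\mathrm{i}s(\alpha-\alpha')}| = 1$ for $s\in \R$, a direct calculation with $\Psi \sim d(\cdot,\partial G)$ shows the exponent of $d(x,\partial G)$ in $|F(\mathrm{i}s)(t,x)|\,d(x,\partial G)^{-\alpha}$ collapses precisely to $-\gamma$, and the same for $|F(1+\mathrm{i}s)(t,x)|\,d(x,\partial G)^{-\alpha'}$. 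Hence
$$\sup_{s \in \R}\|F(\mathrm{i}s)\|_{L_{\infty,\alpha}} + \sup_{s \in \R}\|F(1+\mathrm{i}s)\|_{L_{\infty,\alpha'}} \leq C\,\|u\|_{L_{\infty,\gamma}(Q_{T_0})}.$$

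For the reverse inclusion, take $u \in [L_{\infty,\alpha},L_{\infty,\alpha'}]_\theta$ and any admissible extension $F$ with $F(\theta)=u$. Fix $(t,x)\in Q_{T_0}$ arbitrarily and consider the scalar function
$$h(z) := F(z)(t,x)\,\Psi(x)^{-(1-z)\alpha - z\alpha'}, \qquad z \in \bar S,$$
which is bounded on $\bar S$ and analytic on $S$ (point evaluation at $(t,x)$ is continuous on $L_{\infty,\alpha}+L_{\infty,\alpha'}$). On the boundaries, the $\Psi$-factor contributes modulus $\Psi(x)^{-\alpha}$ resp.\ $\Psi(x)^{-\alpha'}$, so $|h(\mathrm{i}s)| \lesssim \|F(\mathrm{i}s)\|_{L_{\infty,\alpha}}$ and $|h(1+\mathrm{i}s)| \lesssim \|F(1+\mathrm{i}s)\|_{L_{\infty,\alpha'}}$. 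The three-lines lemma at $z=\theta$ then gives $|u(t,x)|\,d(x,\partial G)^{-\gamma} \lesssim \|F\|_{\mathcal F}$, and taking the infimum over $F$ and the supremum over $(t,x)$ closes the argument.

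The main technical subtlety lies in the forward direction: the complex interpolation method formally requires $F$ to be norm-continuous into $L_{\infty,\alpha}+L_{\infty,\alpha'}$, but the multiplier $\Psi^{(\theta-z)(\alpha-\alpha')}$ can oscillate uncontrollably near $\partial G$ in the imaginary direction of $z$, so strong continuity is not automatic. I would resolve this by truncation: first work with $u_n := u\,\mathbf{1}_{\{d(\cdot,\partial G) > 1/n\}}$ (for which $F$ is manifestly norm-continuous into $L_{\infty,\alpha}\cap L_{\infty,\alpha'}$), obtain the bound for $u_n$ with the above witness, and pass to $n\to\infty$ using the uniform-in-$n$ constants. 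Equivalently, one may invoke the variant of the complex method tailored to duals of separable spaces (cf.\ Bergh--L\"ofstr\"om), where admissibility is weak-$*$ rather than norm, and then $F$ is admissible by dominated convergence.
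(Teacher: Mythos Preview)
Your approach differs from the paper's: you attempt the direct complex-method construction, while the paper uses a retraction argument. Concretely, the paper maps $L_{\infty,\beta}(Q_{T_0})$ onto the weighted sequence space $l_\infty^\beta(L_\infty)$ via a dyadic boundary-layer decomposition $S$ (with summation $R$ as left inverse), then invokes Triebel's theorem $[l_\infty^{\alpha}(L_\infty),l_\infty^{\alpha'}(L_\infty)]_\theta = l_\infty^{\gamma}(L_\infty)$ together with the standard retract identity. This delegates the delicate $L_\infty$-endpoint issue entirely to a known reference.

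Your reverse inclusion via the three-lines lemma is fine. For the forward inclusion, however, the truncation fix has a genuine gap. You obtain $u_n := u\,\mathbf{1}_{\{d>1/n\}} \in [L_{\infty,\alpha},L_{\infty,\alpha'}]_\theta$ with uniformly bounded norm, but to conclude that $u$ itself lies in the interpolation space you would need $u_n \to u$ there. By your own reverse inclusion the interpolation norm dominates $\|\cdot\|_{L_{\infty,\gamma}}$, yet
\[
\|u-u_n\|_{L_{\infty,\gamma}} \;=\; \sup_{d(x)\le 1/n}|u(t,x)|\,d(x)^{-\gamma}
\]
need not vanish---take $u = \Psi^\gamma$. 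Thus $(u_n)$ is bounded but not Cauchy in $[\cdot,\cdot]_\theta$, and uniform bounds alone do not let you pass to the limit: the lower complex functor on $L_\infty$-type couples does not enjoy a Fatou property. Your alternative via the second Calder\'on method would give $[L_{\infty,\alpha},L_{\infty,\alpha'}]^\theta = L_{\infty,\gamma}$, which is a different statement; bridging $[\cdot,\cdot]_\theta$ and $[\cdot,\cdot]^\theta$ for this non-reflexive couple is precisely the missing step. The paper's retraction device sidesteps all of this by reducing to the discrete result, where the analogous subtlety is already absorbed into Triebel's proof.
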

\begin{proof}
Denote by $l_\infty^\alpha(L_\infty)$ the set of sequences with elements from $L_\infty(Q_{T_0})$ such that
\begin{equ}
\|(f_n)_{n\geq 0}\|_{l_\infty^\alpha(L_\infty)}=\sup_{n\geq 0}2^{\alpha n}\|f_n\|_{L_\infty(Q_{T_0})}<\infty.
\end{equ}
Then the linear operators $S:\,L_{\infty,\alpha}(Q_{T_0})\rightarrow l_\infty^\alpha(L_\infty)$, $R:\,l_\infty^\alpha(L_\infty)\rightarrow L_{\infty,\alpha}(Q_{T_0})$
\begin{equ}
(S u)_n(t,x):=\bone_{d(x,\partial G)\in [2^{-n-1},2^{-n}]\cdot\text{diam}(G)}u_t(x),\quad
(R (f))_t(x):=\sum_{n\geq 0}f_n(t,x)
\end{equ}
are bounded and satisfy $RS=\text{id}$. The interpolation properties of the spaces $l_\infty^\alpha(L_\infty)$ (see \cite[Thm~1.18.2]{Triebel}) then imply the claim, by \cite[Thm~1.2.4]{Triebel}.
\end{proof}

The setting of the aforementioned simpler version of Theorem \ref{thm:main} is then as follows.
\begin{assumption}\label{as:special}
The function $f$ does not depend on $u$ and $\nabla u$, $g=0$, and almost surely
\begin{equ}
\cK_2:=\|\psi\|_{H^1_{d+3}}+\|f^0\|_{L_\infty([0,T],H^1_{d+3})}<\infty.
\end{equ}
\end{assumption}
\begin{theorem}\label{thm:special}
Let Assumptions \ref{as:coercivity} and \ref{as:coeff regularity} hold and suppose $d_1<\infty$. Then there exists an $\alpha=\alpha(\kappa,K,d,d_1)>0$ 
such that for any $T_0>0$ there exists an almost surely finite random variable $\eta_{T_0}$ such that for all $\psi,f,g$ satisfying Assumption \ref{as:special}, the unique solution $u$ of \eqref{eq:main} belongs to $L_{\infty,\alpha}(Q_{T_0})$, and one has the bound
\begin{equ}
\|u\|_{L_{\infty,\alpha}(Q_{T_0})}\leq\eta_{T_0}\cK_2.
\end{equ}
Moreover, for fixed $K,d,d_1$, there exists a $c_0$ such that for sufficiently small $\kappa$ one has $\alpha>e^{-c_0/\kappa}$.
\end{theorem}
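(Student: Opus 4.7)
The plan is to execute the three remaining steps of the four-step program described in the introduction, following and extending the strategy of \cite{K_Onemore}. First, use the backward stochastic flow associated with the noise coefficients to convert the linear SPDE into a pathwise parabolic PDE with random coefficients posed on a random, time-dependent $\cC^1$ domain. Second, establish quantitative geometric control of that random domain. Third, apply a deterministic boundary Hölder estimate to the transformed equation and push the result back through the flow.

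Concretely, for Step~1 let $X_{s,t}$ denote the backward stochastic flow of
\begin{equation*}
dY_t=\mu^i_t(Y_t)\,dt-\sigma^{ik}_t(Y_t)\,d\overleftarrow W^k_t,
\end{equation*}
with $\mu$ chosen so that It\^o--Wentzell applied to $v_t(x):=u_t(X_{0,t}^{-1}(x))$ eliminates the stochastic integral; this yields a uniformly parabolic linear PDE
\begin{equation*}
\partial_t v=\tilde a^{ij}_t D_iD_j v+\tilde b^i_t D_i v+\tilde f_t
\end{equation*}
on the random domain $D_t:=X_{0,t}(G)$ with zero Dirichlet data. Assumption~\ref{as:coercivity} is precisely what guarantees that the new principal part satisfies $\tilde a\geq\kappa I$, and the $\cC^{3+\nu}$ regularity of $\sigma$ in Assumption~\ref{as:coeff regularity} controls both the lower-order coefficients and the transformed source $\tilde f$ in terms of $\cK_2$.

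For Step~2, Assumption~\ref{as:coeff regularity}(b) yields that $X_{0,t}$ is a $\cC^1$ diffeomorphism jointly in $(t,x)$ with H\"older modulus bounded by an almost-surely finite random variable whose moments are controlled by those of $H$. Consequently $\partial D_t=X_{0,t}(\partial G)$ is a $\cC^1$ boundary satisfying a uniform exterior/interior ball condition of some almost-surely positive radius $r=r(\omega)$, with $r^{-1}$ possessing moments of all orders. The finiteness of $d_1$ is exactly what allows the associated Jacobian bounds to be uniform in $k$ and produces an effective H\"older exponent depending only on $d_1$ rather than on the full noise dimension.

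For Step~3, on a uniformly parabolic linear equation with bounded coefficients posed on a bounded $\cC^1$ domain satisfying such a ball condition, classical boundary H\"older estimates of Krylov--Safonov type (equivalently, barrier arguments) yield $v\in\cC^\alpha$ up to the boundary with $\alpha=\alpha(\kappa,K,d,d_1)>0$ depending only on ellipticity, dimension, and the geometric data; the $L_\infty$ bound comes from the parabolic maximum principle applied to $\tilde f$ and $\psi$ and hence to $\cK_2$. Transporting back via the random Lipschitz map $X_{0,t}^{-1}$ and absorbing its Lipschitz constant into a random prefactor yields
\begin{equation*}
\|u\|_{L_{\infty,\alpha}(Q_{T_0})}\leq\eta_{T_0}\cK_2,
\end{equation*}
and combining with the interior regularity from Theorem~\ref{thm:intcont} (exactly as in the proof of Corollary~\ref{cor}) upgrades this to membership in $\cC^\alpha([T_0,T]\times\bar G)$. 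The most delicate part is Step~2: extracting from the smoothness of $\sigma$ alone a uniform geometric control on $D_t$ whose quantitative aspects depend on the noise only through $\kappa,K,d_1$, so that the deterministic estimate of Step~3 produces an exponent that does not collapse to zero and a prefactor $\eta_{T_0}$ that is almost surely finite. Assumption~\ref{as:coercivity}, which is strictly stronger than the usual super-parabolicity, is what ensures that after the stochastic change of variables one still has uniform ellipticity with constant $\kappa$.
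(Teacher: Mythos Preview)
Your Step~1 is in the same spirit as the paper, but Step~2 contains a genuine gap that makes Step~3 inapplicable. The random space--time domain on which the transformed equation lives does \emph{not} satisfy a uniform exterior ball (or even exterior parabolic cone) condition, and classical Krylov--Safonov boundary estimates therefore cannot be invoked.

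The issue is the time regularity of the lateral boundary. For each fixed $t$ the spatial section $D_t=X_{0,t}(G)$ is indeed a $\cC^1$ domain, since $X_{0,t}$ is a nice diffeomorphism. But the parabolic problem is posed on $\bigcup_t\{t\}\times D_t$, and what matters is how $\partial D_t$ moves in $t$. The flow $X_{0,t}$ is driven by Brownian motion, so $t\mapsto\partial D_t$ has only $(\tfrac12-\eps)$-H\"older regularity in time; in parabolic scaling (where $|t|^{1/2}$ plays the role of a length) this is strictly \emph{sub}-Lipschitz. In particular, for any $c$ there are infinitely many dyadic scales $k$ at which the oscillation of the boundary over a time window $2^{-k}$ exceeds $c\,2^{-k/2}$, so no uniform exterior parabolic cone exists. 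This is not a technicality: Krylov's counterexample quoted in the introduction is a one-dimensional constant-coefficient equation whose transform under the flow is the heat equation on $\{x>\text{Brownian path}\}$, and there the boundary H\"older exponent can be arbitrarily small. If a deterministic Krylov--Safonov estimate applied with exponent depending only on $(\kappa,K,d,d_1)$, that example would be excluded.

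What the paper does instead is exactly tailored to this difficulty. The square root law (Lemma~\ref{lem:sqrt}, extending Theorem~\ref{thm:Onemore}) shows that the \emph{density} of bad scales---those $k$ at which $\operatorname{osc}_{[t-2^{-k},t]}X^{-1}_\cdot(x)>c\,2^{-k/2}$---can be made smaller than any $\hat\pi(c)$, with $\hat\pi(c)\to0$ as $c\to\infty$; but it is never zero. One then works \emph{only along the good scales} $p_i$. At a good scale, enough of the complement of $\tilde Q$ is visible inside the parabolic box that a hitting probability estimate (Lemma~\ref{lem:hitting}) for the Feynman--Kac process $U$ gives a fixed factor $\gamma<1$ of decay:
\[
\cM^\eps(p_{i+1})\le 2^{-p_i}\cK_2+\gamma\,\cM^\eps(p_i).
\]
Iterating over the good scales and using their positive density yields the power-law decay with exponent $\alpha=-2\log_2\hat\gamma$ depending only on $\kappa,K,d,d_1$. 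The finiteness of $d_1$ enters not through Jacobian bounds but through the square root law: one must reduce the oscillation of the flow to that of finitely many scalar Wiener processes in order to get a density bound $\hat\pi(c)$ independent of the specific noise. Your proposal misses both the square root law and the probabilistic scale-by-scale iteration, which are the heart of the argument.
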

\begin{lemma}\label{lem:simplify}
Theorem \ref{thm:special} implies Theorem \ref{thm:main}.
\end{lemma}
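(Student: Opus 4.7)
The plan is to reduce the general nonlinear problem to the special setting of Theorem \ref{thm:special} by two successive linear splittings, the residuals being controlled by the weighted $L_p$-theory of Theorem \ref{thm:Kim} combined with the embedding \eqref{eq:embed cont}. Without loss of generality one may assume $\bar\nu$ is as small as desired (lowering $\bar\nu$ only weakens Assumption \ref{as:data regularity}).

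Start by producing the unique solution $u$ of \eqref{eq:main} via Theorem \ref{thm:WP}, and place it in $\frH^{\bar\nu}_{p,d-2+p,(2)}$ for every $p\in[2,\infty)$ via Theorem \ref{thm:intcont}. Freeze the nonlinearities by setting $F_t(x):=f_t(x,u_t,\nabla u_t)$ and $G^k_t(x):=g^k_t(x,u_t)$, so that $u$ satisfies a linear SPDE with $F,G^k$ as data. Arguing exactly as in the proof of Theorem \ref{thm:intcont}, Assumptions \ref{as:nonlinearity} and \ref{as:data regularity} together with the interior regularity of $u$ ensure that $G^k$ inherits the same weighted regularity as $g^0$; in particular $G^k\in\bH^{\bar\nu}_{d+4,d-1/2,(2)}$.

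Remove the $g$-noise by setting $u_2$ to be the solution of
\begin{equ}
du_2=a^{ij}D_iD_ju_2\,dt+(\sigma^{ik}D_iu_2+G^k)\,dW^k_t,\quad u_2(0)=0,\quad u_2|_{\d G}=0.
\end{equ}
With $(\gamma,p,\theta,q)=(1+\bar\nu,d+4,d-1/2,2)$ --- the range \eqref{eq:theta} holds for $\bar\nu$ small --- Theorem \ref{thm:Kim} yields $u_2\in\frH^{1+\bar\nu}_{d+4,d-1/2,(2)}$, and the embedding \eqref{eq:embed cont} supplies a boundary weight $\Psi^{1-\gamma+(d-\theta)/p}=\Psi^{1/(2(d+4))-\bar\nu}$ with strictly positive exponent $\alpha_2>0$. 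Hence $\|u_2\|_{L_{\infty,\alpha_2}(Q_{T_0})}<\infty$ almost surely. The residual $u_1:=u-u_2$ is $g$-noise-free and solves
\begin{equ}
du_1=(a^{ij}D_iD_ju_1+F)\,dt+\sigma^{ik}D_iu_1\,dW^k_t,\quad u_1(0)=\psi,\quad u_1|_{\d G}=0.
\end{equ}

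Since $F$ and $\psi$ need not sit in $H^1_{d+3}$, perform a second linear splitting $u_1=\tilde u_1+\tilde u_2$: choose approximations $F_\eps,\psi_\eps\in H^1_{d+3}$ of $F,\psi$ (cutoff plus mollification), let $\tilde u_1$ solve the $g$-free equation with data $(F_\eps,\psi_\eps)$, and $\tilde u_2$ with the residual data. Theorem \ref{thm:special} applied to $\tilde u_1$ gives $\|\tilde u_1\|_{L_{\infty,\alpha_1}(Q_{T_0})}<\infty$, and a further application of Theorem \ref{thm:Kim} --- with parameters dictated by the residual regularity of $F-F_\eps$ and $\psi-\psi_\eps$ inherited from Assumption \ref{as:data regularity}, combined if necessary with the interpolation proposition for the scale $L_{\infty,\alpha}$ --- delivers $\|\tilde u_2\|_{L_{\infty,\tilde\alpha_2}(Q_{T_0})}<\infty$ for some $\tilde\alpha_2>0$. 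Summing the three bounds yields Theorem \ref{thm:main} with $\alpha:=\alpha_1\wedge\alpha_2\wedge\tilde\alpha_2$. The main obstacle is the bookkeeping of weights and integrability exponents: at each splitting one must verify simultaneously that the chosen $(\gamma,p,\theta,q)$ lies in the range \eqref{eq:theta}, is compatible with the weighted regularity provided by Assumption \ref{as:data regularity}, and yields a strictly positive boundary exponent $1-\gamma+(d-\theta)/p$ in \eqref{eq:embed cont} --- a balance made possible by the precise form of the weights in Assumption \ref{as:data regularity}.
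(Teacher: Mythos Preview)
Your sketch has the right architecture (freeze the nonlinearities, peel off the $g$-noise via the weighted $L_p$ theory, then invoke Theorem \ref{thm:special}), but two of the steps fail as written.

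First, the claim that ``the range \eqref{eq:theta} holds for $\bar\nu$ small'' is simply false: the admissible interval for $\theta$ depends only on $d$, $p$, and the coercivity ratio $c=\kappa/K$, not on $\bar\nu$ or $\gamma$ at all. A direct computation shows that with $p=d+4$ the choice $\theta=d-1/2$ lies in \eqref{eq:theta} only when $c$ is extremely close to $1$, which Theorem \ref{thm:main} does not assume. The paper circumvents this by solving the auxiliary $g$-equation not with $a^{ij}$ but with $Ca^{ij}$ for $C$ large (the operator $\cS^C$ in the proof): this forces the equation to be parabolic enough that Theorem \ref{thm:Kim} does apply with $\theta=d-1/2$, at the cost of feeding the correction $(C-1)a^{ij}D_iD_j\cS^C(0,0,\bar g)$ back into the drift of the residual equation --- see the identity \eqref{eq:id solution map}. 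Your direct splitting $u=u_1+u_2$ with the original $a^{ij}$ misses this.

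Second, and more seriously, your treatment of $\tilde u_2$ cannot close. The residual data $(F-F_\eps,\psi-\psi_\eps)$ has no more regularity than $(F,\psi)$ themselves, so applying Theorem \ref{thm:Kim} to $\tilde u_2$ is no better than applying it to $u_1$ directly. And with the original $a^{ij}$, Theorem \ref{thm:Kim} \emph{never} produces a positive boundary exponent from data at the regularity level of $F$: the only universally admissible choice $\theta=d-2+p$ gives, via \eqref{eq:embed cont}, the weight exponent $2/p-\bar\nu$, while the embedding only yields pointwise control for $p>(d+2)/\bar\nu$, which forces that exponent negative. Invoking ``the interpolation proposition for the scale $L_{\infty,\alpha}$'' does not help, since interpolating two nonpositive exponents still gives a nonpositive one. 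The paper's mechanism here is genuinely different: it first restricts to $\Omega_c=\{\eta_{T_0}\leq c\}$ so that the estimate of Theorem \ref{thm:special} becomes a deterministic operator bound into $L_\infty(\Omega,L_{\infty,\alpha}(Q_{T_0}))$, and then \emph{complex-interpolates the $g$-free solution operator itself} between this bound and the $L_p(\Omega,L_{\infty,0}(Q_{T_0}))$ bound from Theorem \ref{thm:WP}. The interpolated operator then acts on data of exactly the intermediate regularity $H^\gamma_{d+4}\times L_{d+4}([0,T],H^{-1+\gamma}_{d+4})$ that $F=f(u,\nabla u)$ possesses. No mollification splitting $F=F_\eps+(F-F_\eps)$ is performed; the interpolated bound is applied directly.
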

\begin{proof}
We only detail that the existence of the positive decay exponent $\alpha$ in Theorem \ref{thm:special} implies the corresponding statement in Theorem \ref{thm:main}, the analogous implication concerning the exponential lower bound follows very similarly.

Fix $T_0>0$ and set, for $c\in[0,\infty]$, $\Omega_c:=\{\eta_{T_0}\leq c\}$.
Let, for $C\geq 1$ and $c\in[0,\infty]$, denote by $\cS^C_c(\bar\psi,\bar f,\bar g)$ the random field $v\bone_{\Omega_c}$, where $v$ solves
\begin{equs}
        d v&=(Ca^{ij}D_iD_j v+\bar f)\,dt+(\sigma^{ik}D_i v+\bar g^k)\,dW^k_t\quad & \text{on } &\R_+\times G,\\
        v&=0, & \text{on } &\R_+\times\partial G,\\
        v_0&=\bar \psi&\text{on }& G.
\end{equs}
When $C=1$ and/or $c=\infty$, the corresponding index will be dropped. 

Theorem \ref{thm:special} implies that for any $c<\infty$, $\cS_c(\bar \psi,\bar f,0)$ is bounded as an operator 
\begin{equ}
\text{from}\quad L_\infty(\Omega,H^1_{d+3})
\times L_\infty(\Omega,L_\infty([0,T],H^1_{d+3}))
\quad\text{to}\quad
L_\infty(\Omega,L_{\infty,\alpha}(Q_{T_0})).
\end{equ}
Theorem \ref{thm:WP} implies that $\cS(\bar \psi,\bar f,0)$ (and hence obviously also $\cS_c(\bar \psi,\bar f,0)$ for any $c<\infty$) is bounded 
\begin{equ}
\text{from}\quad L_p(\Omega,L_\infty(G))
\times L_p(\Omega,L_{d+3}([0,T],H^{-1}_{d+3}))
\quad\text{to}\quad
L_p(\Omega,L_{\infty,0}(Q_{T_0})),
\end{equ}
for any $p\in(0,\infty)$. Hence, by interpolation, $\cS_c(\bar \psi,\bar f,0)$ is also bounded 
\begin{equ}
\text{from}\quad L_2(\Omega,H^\gamma_{d+4})
\times L_2(\Omega,L_{d+4}([0,T],H^{-1+\gamma}_{d+4}))
\quad\text{to}\quad
L_2(\Omega,L_{\infty,\alpha'}(Q_{T_0})),
\end{equ}
where $\gamma\leq \bar \nu\wedge 1/(4(d+4))$, $\alpha'>0$ depends only on $\alpha$, $\bar\nu$, and $d$.
Note now that one has the identity
\begin{equ}\label{eq:id solution map}
\cS_c(\bar \psi,\bar f,\bar g)=\cS_c(\bar \psi,\bar f+(C-1)a^{ij}D_iD_j(\cS^{C}(0,0,\bar g)),0)+\cS_c^{C}(0,0,\bar g).
\end{equ}
By Theorem \ref{thm:Kim}, for sufficiently large $C=C(d)$, $S^{C}(0,0,\bar g)$ is bounded
\begin{equ}
\text{from}\quad \bH^{\gamma}_{d+4,d-1/2,(2)}
\quad\text{to}\quad
\frH^{1+\gamma}_{d+4,d-1/2,(2)}.
\end{equ}
Notice that 
\begin{equs}
\frH^{1+\gamma}_{d+4,d-1/2,(2)}\subset\Psi\bH^{1+\gamma}_{d+4,d-1/2,(2)}
&=\bH^{1+\gamma}_{d+4,-4-1/2,(2)}
\\&\subset L_2(\Omega,L_{d+4}([0,T],H^{1+\gamma}_{d+4})),
\end{equs}
where for the last inclusion we used \eqref{eq:easy embed} and the condition on $\gamma$. It is known (see \cite[Thm~3.1]{Lot_WeightedSpaces}) that $a^{ij}D_iD_j$ maps $H^{1+\gamma}_{d+4}$ to $H^{-1+\gamma}_{d+4}$. Therefore the first term in \eqref{eq:id solution map} is bounded
\begin{equs}[eq:bd]
\text{from}\quad L_2(\Omega,H^\gamma_{d+4})
\times L_2(\Omega,L_{d+4}&([0,T],H^{-1+\gamma}_{d+4}))\times\bH^{\gamma}_{d+4,d-1/2,(2)}
\\&\quad\text{to}\quad
L_2(\Omega,L_{\infty,\alpha'}(Q_{T_0})).
\end{equs}
Finally, \eqref{eq:embed cont} implies that for a sufficiently small $\eps=\eps(d,\gamma)>0$, $\frH^{1+\gamma}_{d+4,d-1/2,(2)}$ is embedded into $\Psi^\eps L_2(\Omega, L_\infty(Q))$, and thus (possibly after lowering the value of $\alpha'$) the whole solution map $\cS_c(\bar \psi,\bar f,\bar g)$ has boundedness in property in \eqref{eq:bd}.

Since on $\Omega_c$, $u=\cS(\psi,f(u,\nabla u),g(u))$, and by assumption $\psi\in L_2(\Omega,H^\gamma_{d+4})$, it suffices to check that
\begin{equ}\label{eq:12}
f(u,\nabla u)\in L_2(\Omega,L_{d+4}([0,T],H^{-1+\gamma}_{d+4}(G))),\quad g(u)\in\bH^{\gamma}_{d+4,d-1/2,(2)}.
\end{equ}
The first of these inclusions already follows from Theorem \ref{thm:WP}:
by assumption $f^0\in L_2(\Omega,L_{d+4}([0,T],H^{-1+\gamma}_{d+4}(G)))$, we have already seen that $|\bar f(u)-f^0|\leq K|u|^m\in L_2(\Omega,L_\infty(Q))$, and note that
\begin{equ}
\nabla\cdot \hat f(u)\in L_2(\Omega,L_2(Q))\cap L_2(\Omega,L_\infty([0,T],H^{-1}_\infty(G))
\end{equ}
implies, by interpolation,  $\nabla\cdot\hat f( u) \in L_2(\Omega,L_{d+4}([0,T],H^{-1+\gamma}_{d+4}(G)))$.
The second inclusion on \eqref{eq:12} is a consequence of the Lipschitz continuity in $u$ of $g(u)$, the assumption $g^0\in\bH^{\gamma}_{d+4,d-1/2,(2)}$, and that by Theorem \ref{thm:intcont},
\begin{equ}
u\in\frH^{\gamma}_{d+4,d-2+d+4,(2)}\subset\bH^{\gamma}_{d+4,d-2,(2)}.
\end{equ}
\end{proof}

\subsection{An It\^o-Wentzell formula}
In light of Lemma \ref{lem:simplify}, we consider
\begin{equs}
du_t(x)&=(a^{ij}_t(x)D_iD_ju_t(x)+f_t(x))\,dt+\sigma^{ik}_t(x)D_iu_t(x)\,dW^k_t&\quad&\text{on }\R_+\times G,
\\
u_t(x)&=0&\quad&\text{on }\R_+\times\partial G,
\\
u_0(x)&=\psi(x)&\quad&\text{on } G.
\end{equs}
Consider the flow $(X_t(x))_{(t,x)\in Q^+}$ given by the SDE
\begin{equ}\label{eq:flow X}
dX_t=-\sigma^{k}_t(X_t)\,dW^k_t,
\end{equ}
which exists under Assumption \ref{as:coeff regularity} (a) by the general theory of stochastic flows, see for example \cite[Thm~II.3.1]{Kunita_StFleur}. Here and below $\sigma^k$ stands for the column vector $(\sigma^{1k},\ldots,\sigma^{dk})$.
Since the coefficients are assumed to vanish outside $G^+$, the flow $X$, and in fact any flow appearing below that is built from the coefficients $a$ and $\sigma$, are trivial outside $G^+$.
Formally applying the It\^o-Wentzell formula, the field $v_t(x):=u_t(X_t(x))$ is expected to satisfy
\begin{equs}[eq:transformed]
\partial_t v_t(x)&=
\bar a^{ij}_t(X_t(x))(D_iD_j u_t)(X_t(x))-(\sigma_t^{ik}D_i\sigma_t^{jk})(X_t(x))(D_ju_t)(X_t(x))
\\
&\quad+f_t(X_t(x))
\\
&=\alpha^{ij}_t(x)D_iD_j v_t(x)+\beta^i_t(x)D_iv_t(x)+\varphi_t(x),
\end{equs}
on the (random) domain
$$\tilde Q:=\{(t,x):\,t\in(0,T], X_t(x)\in G\},$$
 where
here and in the following we use the notations
\begin{equs}
\alpha_t(x)&=\alpha_t(x)(\omega)=(\nabla X_t(x))^{-1}\bar a_t(X_t(x))((\nabla X_t(x))^*)^{-1}
\\
\beta_t(x)&=\beta_t(x)(\omega)=(\nabla X_t(x))^{-1}\Big(\Sigma_t(X_t(x))-\nabla^2(X_t(x))\alpha_t(x)\Big)
\\
\varphi_t(x)&=\varphi_t(x)(\omega)=f_t(X_t(x))
\\
\Sigma_t(x)&=(\nabla\sigma_t(x))\sigma_t^*(x)
\end{equs}
Unfortunately no version of the It\^o-Wentzell formula known to the author is actually applicable here, so we should confirm that the above formal computation is correct. It is worth noting that (again due to \cite[Thm~II.3.1]{Kunita_StFleur}) the coefficients $\alpha$ and $\beta$ are both almost surely uniformly (in $t$) bounded in $\cC^{2+\nu/2}$ and $\cC^{1+\nu/2}$, respectively.
\begin{lemma}
Let Assumptions \ref{as:coercivity}, \ref{as:coeff regularity} (a), and \ref{as:special} hold. Then with the above notations, 
for almost all $\omega\in\Omega$, the function $(v_t(x))_{(t,x)\in\tilde Q(\omega)}(\omega)$ is the probabilistic solution of \eqref{eq:transformed} on $\tilde Q(\omega)$, with initial condition $\psi$ and boundary condition $0$.
\end{lemma}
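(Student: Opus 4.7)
The plan is to establish \eqref{eq:transformed} by a regularization argument: mollify $u$ spatially, apply a classical It\^o-Wentzell formula to the smoother field, and pass to the limit on compact subsets of the random set $\tilde Q$. Fix $\omega$ outside a null set on which $X_t(\cdot)(\omega)$ is a $\cC^{2+\nu'}$-diffeomorphism of $\R^d$ for some $\nu'\in(0,\nu)$ (guaranteed by Assumption \ref{as:coeff regularity}(a) and standard stochastic-flow theory, e.g.\ \cite[Thm~II.3.1]{Kunita_StFleur}), and on which $u\in L_p([0,T],H^{2+\bar\nu}_{p,\mathrm{loc}}(G))$ for every $p\in[2,\infty)$; the latter follows from Theorem \ref{thm:intcont} combined with the interior $L_p$-regularity provided by Theorem \ref{thm:Kim}. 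Let $\rho^\eps$ be a spatial mollifier and set $u^\eps:=u*_x\rho^\eps$. On any open $G'\subset\subset G$ with $G'+B_\eps\subset G$, $u^\eps$ is smooth in $x$ and satisfies
\begin{equs}
du^\eps_t=(a^{ij}D_iD_ju^\eps_t+f_t*_x\rho^\eps+r^\eps_{1,t})\,dt+(\sigma^{ik}D_iu^\eps_t+r^\eps_{2,t,k})\,dW^k_t,
\end{equs}
where $r^\eps_1, r^\eps_{2,k}$ are the Friedrichs commutators between $\rho^\eps*_x$ and multiplication by $a,\sigma$ composed with $D_iD_j, D_i$. By Assumption \ref{as:coeff regularity}(a) and the interior regularity of $u$, these tend to $0$ in $L_p([0,T],H^1_p(G'))$.

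For fixed $\eps$ and $x$ in a compact subset of $\tilde Q(\omega)$, $u^\eps$ is smooth in $x$ so the classical It\^o-Wentzell formula (\cite[Thm~I.8.3]{Kunita_StFleur}) applies to $v^\eps_t(x):=u^\eps_t(X_t(x))$. The stochastic integral $\sigma^{ik}(X_t)D_iu^\eps_t(X_t)\,dW^k_t$ from the SPDE is exactly cancelled by $(D_iu^\eps_t)(X_t)dX^i_t=-\sigma^{ik}(X_t)D_iu^\eps_t(X_t)\,dW^k_t$ arising from the chain rule, leaving only a $r^\eps_{2,k}(X_t)\,dW^k_t$ residual. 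The quadratic-variation contribution $\tfrac12 D_iD_ju^\eps_t(X_t)(\sigma\sigma^*)^{ij}(X_t)\,dt$ combines with $a^{ij}(X_t)D_iD_ju^\eps_t(X_t)\,dt$ to give $\bar a^{ij}(X_t)D_iD_ju^\eps_t(X_t)\,dt$, and the Wentzell cross bracket $(\partial_iB^k_t)(X_t)s^{ik}_t\,dt$ produces $-(\sigma^{jk}D_j\sigma^{ik})(X_t)D_iu^\eps_t(X_t)\,dt$ plus $-\sigma^{ik}(X_t)D_ir^\eps_{2,k}(X_t)\,dt$. Expressing $(D_\cdot u^\eps)(X_t)$ and $(D^2_{\cdot\cdot}u^\eps)(X_t)$ in terms of $D_\cdot v^\eps, D^2_{\cdot\cdot}v^\eps$ via $\nabla X_t$ yields the right-hand side of \eqref{eq:transformed} for $v^\eps$, plus an $R^\eps_t(x)$ built linearly from $r^\eps_1,r^\eps_{2,k}$ and their first derivatives evaluated at $X_t(x)$, and plus the stochastic residual $r^\eps_{2,k}(X_t)\,dW^k_t$.

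Finally, I would let $\eps\downarrow 0$ on an arbitrary compact $K\subset\tilde Q(\omega)$. By continuity of $(t,x)\mapsto X_t(x)$, the image of $K$ is contained in a compact $K'\subset G$, so the interior $H^{2+\bar\nu}_{p,\mathrm{loc}}$-regularity of $u$ upgrades the convergence $u^\eps\to u$ to convergence of the first two spatial derivatives in $L_p(K')$; composing with the diffeomorphism $X_t(\cdot)$ preserves this, and the coefficients $\alpha,\beta,\varphi$ are continuous (indeed H\"older) in $x$ with $\alpha$ uniformly elliptic on $K$ by Assumption \ref{as:coercivity}. The residuals $R^\eps$ and $r^\eps_{2,k}(X_t)$ vanish in $L_p(K)$, so $v=\lim v^\eps$ satisfies \eqref{eq:transformed} distributionally on $\tilde Q(\omega)$, which for this uniformly parabolic linear equation with continuous coefficients coincides with being the probabilistic solution. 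The initial condition $v_0(x)=u_0(X_0(x))=\psi(x)$ follows from $X_0=\id$, and the lateral boundary condition $v_t(x)=u_t(X_t(x))=0$ whenever $X_t(x)\in\partial G$ is immediate from the Dirichlet condition on $u$. The main obstacle is the commutator analysis for $\sigma^{ik}D_ir^\eps_{2,k}$, which is exactly why one needs the $\cC^{3+\nu}$-regularity of $\sigma$ in Assumption \ref{as:coeff regularity}(a) and the interior $H^{2+\bar\nu}_{p,\mathrm{loc}}$-regularity of $u$; with those in hand, the passage to the limit is standard.
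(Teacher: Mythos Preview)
Your approach via spatial mollification and the classical It\^o--Wentzell formula is genuinely different from the paper's. The paper never establishes that $v$ solves \eqref{eq:transformed} in any distributional or classical sense; instead it represents $u$ by a Feynman--Kac formula on an enlarged probability space carrying an auxiliary Brownian motion $B$, manipulates the stochastic flows $Y$, $X$, $Z$, $U$ to obtain the identity $Y_{s,t}^{-1}(X_t(x))=X_s(Z_{s,t}^{-1}(x))$, and thereby reads off the Feynman--Kac expression for $v_t(x)=u_t(X_t(x))$ directly in terms of the backward flow $U$. The It\^o--Wentzell formula is applied only to smooth random fields (coordinate functions composed with flows), never to $u$.

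Your route has a genuine gap at the final step. You show that $v$ satisfies \eqref{eq:transformed} distributionally on $\tilde Q(\omega)$ and then assert that ``for this uniformly parabolic linear equation with continuous coefficients [this] coincides with being the probabilistic solution.'' That equivalence is precisely the non-trivial point the paper works around. The domain $\tilde Q(\omega)$ is non-cylindrical with lateral boundary $\partial\tilde Q_t=X_t^{-1}(\partial G)$ moving only $(\tfrac12-)$-H\"older continuously in $t$; standard identifications of weak, classical, and probabilistic solutions for parabolic Dirichlet problems require better time-regularity of the moving boundary (so that barrier constructions or boundary estimates are available). Without that you would still need to show both that the Feynman--Kac expression is a distributional solution on $\tilde Q(\omega)$ with the correct lateral trace, and that such solutions are unique---neither of which is routine here. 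This is presumably exactly why the author remarks that no known It\^o--Wentzell formula applies and instead argues purely at the level of flows. A secondary concern: your claim that $u\in L_p([0,T],H^{2+\bar\nu}_{p,\mathrm{loc}})$ is plausible for $t>0$ by parabolic smoothing, but Assumption~\ref{as:special} only gives $\psi\in H^1_{d+3}$, so near $t=0$ you cannot expect second derivatives of $u$ in $L_p$; your commutator argument (in particular the control of $D_i r^\eps_{2,k}$) must therefore be restricted to $t\geq\delta>0$ and the initial layer treated separately.
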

\begin{proof}
Recall a Feynman-Kac formula for SPDEs with Dirichlet boundary conditions from \cite{GG_FK}. Let $(B_t^{r})_{r=1,\ldots,d,\,t\geq 0}$ be the canonical $d$-dimensional Wiener process on the standard Wiener space $(\hat\Omega,(\hat\cF_t)_{t\geq0},\hat P)$.
Fix for now and for the rest of the paper $\rho$ to be a $\cC^{2+\nu}(G)$ square root of $2\bar a$. Introducing the flow $Y$ given by the SDE given on the completion of the probability space $(\Omega\times\hat\Omega,(\cF_t\otimes\hat\cF_t)_{t\geq0},P\otimes\hat P)$,
\begin{equ}
dY_t=(\sigma^{ik}_tD_i\sigma^k_t+\rho^{ir}_tD_i\rho^r_t)(Y_t)\,dt-\sigma^k_t(Y_t)\,dW^k_t-\rho^r_t(Y_t)\,dB^r_t
\end{equ}
and the exit time of the inverse characteristics
\begin{equ}
\gamma_{t,x}=\sup\{s\in[0,t]:\,(s,Y_{s,t}^{-1}(x))\notin(0,T]\times G\},
\end{equ}
one has by  \cite[Thm~2.1]{GG_FK}, for all $t\in[0,T]$, $dx\otimes dP$-almost everywhere
\begin{equ}
u_t(x)=\E^{\hat P}\Big(\psi(Y_{0,t}^{-1}(x))\bone_{\gamma_{t,x}=0}+\int_{\gamma_{t,x}}^tf_s(Y_{s,t}^{-1}(x)))\,ds\Big).
\end{equ}
For a fixed $s\in[0,T]$, consider $w=(w^{(1)},\ldots,w^{(d)})$, the solution of the (system of) fully degenerate SPDEs
\begin{equ}
dw^{(l)}_t(x)=a^{ij}_t(x)D_iD_jw^{(l)}_t(x)\,dt+\sigma^{ik}_t(x)D_iw^{(l)}_t(x)\,dW^k_t+\rho^{ir}_t(x)D_iw^{(l)}_t(x)\,dB^r_t
\end{equ}
on $[s,T]\times\R^d$, with initial condition $w^{(l)}_s(x)=x^l$, $l=1,\ldots,d$, which exists and is unique by \cite{GGK14}. 
Now we may apply the It\^o-Wentzell formula \cite[Thm~1.1]{K_I-W} and verify that the differential of $w^{(l)}_t(Y_{s,t}(x))$ is $0$, and hence $w^{(l)}_t(x):=(Y^{-1}_{s,t}(x))^l$.
Applying the It\^o-Wentzell formula again, one sees that $z_t^{(l)}(x):=w_t^{(l)}(X_t(x))$ satisfies, with the notation $\bar \rho_t(x)=(\nabla X_t(x))^{-1}\rho_t(X_t(x))$,
\begin{equs}[eq:z]
dz^{(l)}_t(x)&=[\bar a_t^{ij}(X_t(x))D_iD_jw^{(l)}_t(X_t(x))-(\sigma_t^{ik}D_i\sigma^{jk}_t)(X_t(x))D_jw^{(l)}_t(X_t(x))]\,dt
\\&\quad\quad+\rho^{ir}_t(X_t(x))D_iw^{(l)}_t(X_t(x))\,dB_t^r
\\&=[\alpha^{ij}_t(x)D_iD_jz^{(l)}_t(x)+\beta^i_t(x)D_iz^{(l)}_t(x)]\,dt+\bar\rho^{ir}_t(x)D_iz^{(l)}_t(x)\,dB^r_t
\end{equs}
on $[s,T]\times\R^d$ with initial condition $z^{(l)}_s(x)=X_s^l(x)$.
Note that (due to again \cite[Thm~II.3.1]{Kunita_StFleur}) the coefficients $\alpha$, $\beta$, $\bar\rho$ are almost surely bounded processes in $\cC^{2+\nu/2}$, $\cC^{1+\nu/2}$, and $\cC^{2+\nu/2}$, respectively. So (see e.g. \cite{Krylov_Intro}) one can find processes $\beta^{[m]}$ and $\bar\rho^{[m]}$ which are step functions in the sense that they are of the form
\begin{equ}
\sum_{i=1}^{k}\sum_{j=1}^{l_i}\bone_{[t_{i-1},t_{i})}\bone_{A_j^i}h_{i,j}
\end{equ}
with some $k=k(m)$, $l_i=l_i(m)$, some partition $0=t_0<t_1<\cdots<t_{k}=T$, some $\cF_{t_{i-1}}$-measurable events $A_j^i$, and some deterministic smooth functions $h_{i,j}$, and such that furthermore
\begin{equ}\label{eq:conv}
\|(\beta^{[m]}-\beta)\|_{\cC^{1+\nu/3}}
+\|(\bar\rho^{[m]}-\bar\rho)\|_{\cC^{2+\nu/3}}\rightarrow0
\end{equ}
as $m\rightarrow 0$ in measure with respect to $dt\otimes dP$. One can of course also assume that the left-hand side above never exceeds $1$.
Denote by $z^{[m]}$ the solution of \eqref{eq:z} with $\alpha=\bar\rho\bar\rho^*$, $\beta$, and $\bar\rho$ replaced by $\bar\rho^{[m]}(\bar\rho^{[m]})^*$, $\beta^{[m]}$, and $\bar\rho^{[m]}$, respectively. 
If we set $\tau_n:=\inf\{t\geq0:\,|(\nabla X_t(x))^{-1}|+|D_k X_t(x)|\leq n,\,\forall |k|\leq 3\}\wedge T$, then up to $\tau_n$, the coefficients are bounded in the appropriate spaces, and the existence an uniqueness of such solution on $[0,\tau_n]$ follows again from \cite{GGK14}, along with the fact that
\begin{equ}\label{eq:conv2}
\|z^{[m]}-z\|_{L_q(\llbracket0,\tau_n\rrbracket,H^1_p)}\rightarrow 0
\end{equ}
as $m\rightarrow \infty$, for any $p,q\in[2,\infty)$.

Now introduce the flow $Z^{[m]}$ given by the SDE on the probability space $(\Omega\times\hat\Omega,(\cF_t\otimes\hat\cF_t)_{t\geq0},P\otimes\hat P)$
\begin{equ}\label{eq:Zm}
dZ^{[m]}_t=(-\beta^{[m]}_t+\bar\rho_t^{[m],ir}D_i\bar\rho_t^{[m],r})(Z^{[m]}_t)\,dt-\bar\rho^{[m],r}_t(Z^{[m]}_t)\,dB^r_t.
\end{equ}
For almost all fixed $\omega$, $Z^{[m]}(\omega)$, as a function of $\hat\omega$, $s$, $t$, $x$, is the flow given by the SDE \eqref{eq:Zm} on the probability space $(\hat \Omega,(\hat\cF_t)_{t\geq0},\hat P)$, with `deterministic' coefficients $\beta^{[m]}(\omega)$, $\bar\rho^{[m]}(\omega)$. 
Moreover, the convergence \eqref{eq:conv} (at least along a subsequence) holds for almost all $\omega$ in measure with respect to $dt$.
So by the limit theorems of flows (see e.g \cite{Kunita_Book97}), the limit $Z:=\lim Z^{[m]}$ exists (for example, in $\cC^{\nu/4}(Q^+)$), and is on the one hand the flow corresponding to the equation
\begin{equ}\label{eq:Z'}
dZ_t=(-\beta_t+\bar\rho_t^{ir}D_i\bar\rho_t^{r})(Z_t)\,dt-\bar\rho^r_t(Z_t)\,dB^r_t.
\end{equ}
on $(\Omega\times\hat\Omega,(\cF_t\otimes\hat\cF_t)_{t\geq0},P\otimes\hat P)$, and on the other hand, also on $(\hat \Omega,(\hat\cF_t)_{t\geq0},\hat P)$, for almost all $\omega\in\Omega$. One more application of the It\^o-Wentzell formula then yields that the differential of $z^{[m]}_t(Z^{[m]}_{s,t}(x))$ is $0$, that is, $z^{[m]}_t(Z^{[m]}_{s,t}(x))=X_s(x)$. After passing to the limit using \eqref{eq:conv2}, and using the fact that both sides are continuous in all arguments, we therefore obtain that almost surely for all $0\leq s\leq t\leq T$, $x\in \R^d$,
\begin{equ}
Y_{s,t}^{-1}(X_t(Z_{s,t}(x)))=X_s(x),\quad\text{or,}\quad Y_{s,t}^{-1}(X_t(x))=X_s(Z_{s,t}^{-1}(x)).
\end{equ}

By \cite[Thm~II.6.1]{Kunita_StFleur}, for each fixed $\omega$ the inverse flow of $ Z(\omega)$ can be given explicitly: $Z_{s,t}^{-1}(\omega)=U_{t,s}(\omega)$, where the flow $U=U(\omega)$ goes backwards in time and is given by the SDE (parametrized by $\omega\in\Omega$)
\begin{equ}\label{eq:flow U}
dU_t=\beta_t(U_t)\,dt+\bar\rho_t^r(U_t)\,d\overleftarrow{B}^r_t.
\end{equ}
Furthermore, almost surely
\begin{equs}
\tau_{t,x}:=\gamma_{t,X_t(x)}&=\sup\{s\in[0,t]:\,(s,X_s(Z_{s,t}^{-1}(x)))\notin(0,T]\times G\}
\\
&=\sup\{s\in[0,t]:\,(s,Z_{s,t}^{-1}(x))\notin\tilde Q\}
\\
&=\sup\{s\in[0,t]:\,(s,U_{t,s}(x))\notin\tilde Q\}
\end{equs}
is indeed the exit time of $U$ from $\tilde Q$. Hence
\begin{equs}
v_t(x)&=u_t(X_t(x))
\\&=\E^{\hat P}\Big(\psi(Y_{t}^{-1}(X_t(x)))\bone_{\gamma_{t,X_t(x)}=0}+\int_{\gamma_{t,X_t(x)}}^tf_s(Y_{s,t}^{-1}(X_t(x))))\,ds\Big)
\\
&=\E^{\hat P}\Big(\psi(U_{t,0}(x))\bone_{\tau_{t,x}=0}+\int_{\tau_{t,x}}^t\varphi_s(U_{t,s})\,ds\Big),
\end{equs}
and the right-hand side is indeed the probabilistic solution of \eqref{eq:transformed} with initial condition $\psi$ and boundary condition $0$. While the above equality is a priori only justified for all $t\in[0,T]$, $dx\otimes dP$-almost everywhere, since both sides are continuous in $(t,x)\in\tilde Q$, the equality holds $P$-almost surely for all $(t,x)\in\tilde Q$.
\end{proof}

\subsection{Krylov's square root law for inverse flows}
Define, for $(x_t)_{t\in[0,T]}\in\cC([0,T],V)$, where $V$ is some normed vector space, for $c\in(0,\infty)$, and $t\in[0,T]$  the quantity
\begin{equ}
N_n(x_\cdot,c,t)=\#\{k=0,\ldots,n:\underset{[t-2^{-k},t]}{\text{osc}}x_\cdot>c2^{-k/2}\},
\end{equ}
where the convention $x_t=x_0$ for $t\in[-1,0)$ is used. We will need a generalization of the following square root law.
\begin{theorem}[\cite{K_Onemore}]\label{thm:Onemore}
Let $(w_t)_{t\geq0}$ be a 1-dimensional Wiener process. Then for all $c,T\in(0,\infty)$, almost surely
\begin{equ}
\limsup_{n\rightarrow\infty}\sup_{t\in[0,T]}\frac{1}{n+1}N_n(w_\cdot,c,t)=\pi(c),
\end{equ}
with a deterministic function $\pi(c)$ that converges to $0$ as $c\rightarrow\infty$.
\end{theorem}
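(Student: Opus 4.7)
The plan is to exploit the self-similarity of Brownian motion to recast $N_n(w, c, t)$ as a Birkhoff sum under an ergodic dyadic zoom, and then upgrade pointwise convergence to the required $\sup_t$ estimate via a discretization argument and exponential concentration.

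\textbf{Pointwise ergodic theorem.} Fix $t$ and set $\tilde w^{(k)}(\tau) := 2^{k/2}(w_{t - 2^{-k}(1-\tau)} - w_{t - 2^{-k}})$ for $\tau\in[0,1]$. By Brownian scaling each $\tilde w^{(k)}$ is a standard Brownian motion on $[0,1]$, and $A_k^t := \{\mathrm{osc}_{[t - 2^{-k},t]} w > c 2^{-k/2}\}$ equals $\{\mathrm{osc}_{[0,1]} \tilde w^{(k)} > c\}$. The dyadic zoom $T: C([0,1])\to C([0,1])$, $(Tw)(\tau) := \sqrt{2}(w(1/2 + \tau/2) - w(1/2))$, preserves Wiener measure, obeys $\tilde w^{(k+1)} = T\tilde w^{(k)}$, and is ergodic because its tail $\sigma$-algebra lies in $\bigcap_m \sigma(w_s : s\le 2^{-m})$, which is trivial by Blumenthal's $0$--$1$ law. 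Birkhoff's theorem therefore yields, for each fixed $t$, $(n+1)^{-1} N_n(w, c, t) \to p(c) := P(\mathrm{osc}_{[0,1]} w > c)$ almost surely, with $p(c)\to 0$ as $c\to\infty$ by path continuity at $0$.

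\textbf{Discretization.} To exchange $\sup_{t\in[0,T]}$ for a maximum over a grid $t_j := j 2^{-n}$, $0\le j\le \lceil T 2^n\rceil$: for $|t-t_j|\le 2^{-n}$ and $k\le n$, the symmetric difference $[t-2^{-k},t]\triangle[t_j-2^{-k},t_j]$ has length $\le 2^{1-n}$, so Lévy's modulus of continuity guarantees that the corresponding oscillations of $w$ differ by at most $C\, 2^{-n/2}\sqrt{n}$ almost surely for all large $n$. Hence $A_k^t$ at threshold $c$ implies $A_k^{t_j}$ at threshold $c - C\sqrt{n}\,2^{(k-n)/2}$, which is $c - o(1)$ whenever $k\le n - 2\log_2 n$; the remaining $O(\log n)$ scales contribute only $o(n)$ to $N_n$. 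Consequently, for any $\delta>0$ and $n$ sufficiently large,
\begin{equation*}
\sup_{t\in[0,T]} \frac{N_n(w, c, t)}{n+1} \;\le\; \max_j \frac{N_n(w, c-\delta, t_j)}{n+1} + \delta \qquad\text{a.s.}
\end{equation*}

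\textbf{Concentration and the main obstacle.} The remaining and most delicate step is an exponential tail bound for each $N_n(w, c-\delta, t_j)$ strong enough to survive the union bound over $\sim T 2^n$ grid points. The indicators $\mathbf{1}_{A_k^{t_j}}$ are not independent across $k$, but with respect to the decreasing filtration $\mathcal{F}_k := \sigma(w_s - w_{t_j}: s\in[t_j - 2^{-k}, t_j])$ one has $P(A_k^{t_j}\mid \mathcal{F}_{k+1})\le q(c)$ for a deterministic $q(c)\to 0$ as $c\to\infty$, which via a Chernoff/Azuma estimate yields
\begin{equation*}
P\bigl(N_n(w, c-\delta, t_j) > a(n+1)\bigr) \;\le\; e^{-n I(a,c)}
\end{equation*}
with rate $I(a,c)\to\infty$ as $c\to\infty$ for any fixed $a>0$. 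Defining $\pi(c) := \inf\{a>0 : I(a,c)\le \log 2\}$ one has $\pi(c)\to 0$ as $c\to\infty$, and Borel--Cantelli applied to the union bound produces $\limsup_n \sup_t (n+1)^{-1} N_n \le \pi(c)$ almost surely. The matching lower bound comes from restricting $A_k^{t_j}$ to disjoint half-intervals $[t_j - 2^{-k}, t_j - 2^{-k-1}]$, which makes the indicators independent across both $k$ and across sufficiently separated $j$; the maximum over $\sim 2^n$ essentially i.i.d.\ grid points then hits exactly the level where the tail probability is $2^{-n}$, reproducing $\pi(c)$. The chief difficulty is quantitative: since $N_n$ is a sum of only $n+1$ terms, standard CLT-level tails $e^{-n\delta^2}$ do not beat the $2^n$ cost of the union bound for small $\delta$, so one must deploy genuine large-deviation bounds with rate $\sim a\log(1/p(c))$, available only because $p(c)$ is small when $c$ is large.
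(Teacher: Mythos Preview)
The paper does not contain a proof of this statement: Theorem~\ref{thm:Onemore} is quoted verbatim from Krylov's paper \cite{K_Onemore} and used as a black box in the proof of Lemma~\ref{lem:sqrt}. There is therefore nothing in the present paper to compare your argument against; any comparison would have to be with the original source.

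That said, your sketch has a genuine gap in the concentration step. You claim that with respect to the decreasing filtration $\mathcal{F}_k=\sigma(w_s-w_{t_j}:s\in[t_j-2^{-k},t_j])$ one has $P(A_k^{t_j}\mid\mathcal{F}_{k+1})\le q(c)$ for a deterministic $q(c)$. This is false as stated: the oscillation of $w$ over $[t_j-2^{-k-1},t_j]$ is $\mathcal{F}_{k+1}$-measurable, and on the event that it already exceeds $c\,2^{-k/2}$ (an event of positive probability, namely $P(\mathrm{osc}_{[0,1]}w>c\sqrt{2})$), the larger oscillation over $[t_j-2^{-k},t_j]$ exceeds $c\,2^{-k/2}$ as well, so $P(A_k^{t_j}\mid\mathcal{F}_{k+1})=1$ there. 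Thus no uniform bound $q(c)<1$ can hold, and the Azuma/Chernoff step as written does not go through. Your lower-bound idea of passing to the disjoint half-intervals $[t_j-2^{-k},t_j-2^{-k-1}]$ does give genuinely independent indicators, but the nested events $A_k^{t_j}$ themselves are positively correlated in a way that prevents the naive martingale/Chernoff bound you invoke.

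A second, more structural issue: even if one repairs the tail bound, the function $\pi(c)$ you end up with (the level at which the large-deviation rate equals $\log 2$) is not the same object as the ergodic average $p(c)=P(\mathrm{osc}_{[0,1]}w>c)$ from your first step, and in general $\pi(c)>p(c)$ strictly. This is consistent with the theorem as stated (which only asserts existence of \emph{some} deterministic $\pi(c)\to 0$), but it means your pointwise ergodic theorem in the first paragraph plays no role in identifying $\pi(c)$ and is not actually needed for the result; the argument is really a pure large-deviation/Borel--Cantelli computation over the dyadic grid, and the matching lower bound must come from the same LD rate, not from Birkhoff.
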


First we prove the following auxiliary lemma. For deterministic $\sigma$, similar estimates often appear in rough path theory, but we could not find a version that implies this form. We therefore provide a proof (using in fact less regularity requirement on $\sigma$ than in for example \cite[Prop~8.3]{FH}).
\begin{lemma}\label{lem:RP}
Let $\lambda\in(0,1/2)$. Let $\sigma$ be a bounded predictable process with values in $\cC^{1}(\R^d)$ that vanishes outside $G^+$ and such that $\|\sigma\|_{\cC^{\lambda}([0,T],L_\infty(\R^d))}$ has finite moments of any order.
Then with the flow $X$ given by \eqref{eq:flow X}, any $\eps>0$ and $p\geq 0$, 
$$
\E\Big(\sup_{s,t\in[0,T];\,y\in G^+}\frac{|X_{t}(y)-X_{s}(y)+\sigma^k_{s}(X_{s}(y))(W^k_{t}-W^k_{s})|}{|t-s|^{(1+2\lambda)/2-2\eps}}\Big)^p\leq N
$$
for a constant $N$ depending on $p$, $\lambda$, $\eps$, $d$, $T$, $G$, and the bounds on $\sigma$.
\end{lemma}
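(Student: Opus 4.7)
The central observation is that the SDE \eqref{eq:flow X} allows the quantity inside the supremum to be rewritten as a stochastic integral:
\begin{equ}
Z_{s,t}(y):=X_{t}(y)-X_{s}(y)+\sigma^k_{s}(X_{s}(y))(W^k_{t}-W^k_{s})=-\int_s^t\bigl[\sigma^k_r(X_r(y))-\sigma^k_s(X_s(y))\bigr]dW^k_r.
\end{equ}
The integrand is of order $|r-s|^\lambda$, which is exactly the source of the extra $|t-s|^\lambda$ beyond the standard $|t-s|^{1/2}$ one would get for a martingale increment.

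Concretely, I would split the integrand as $[\sigma^k_r(X_r(y))-\sigma^k_s(X_r(y))]+[\sigma^k_s(X_r(y))-\sigma^k_s(X_s(y))]$. The first bracket is bounded by $\|\sigma\|_{\cC^\lambda([0,T],\cC^0)}|r-s|^\lambda$ using the hypothesis on $\sigma$; the second by $K|X_r(y)-X_s(y)|$ using the Lipschitz property of $\sigma$ in space. Standard flow estimates (e.g.\ \cite[Thm~II.3.1]{Kunita_StFleur}, followed by Kolmogorov) give, for arbitrarily small $\eps'>0$ and any $p$, that $\|X_\cdot(\cdot)\|_{\cC^{1/2-\eps'}([0,T],\cC^0(G^+))}$ has finite $L^p(\Omega)$-moments. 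Since $\lambda<1/2$, choosing $\eps'<1/2-\lambda$ absorbs the second bracket into the first up to a constant: pointwise,
\begin{equ}
\bigl|\sigma^k_r(X_r(y))-\sigma^k_s(X_s(y))\bigr|_{l_2}\leq G(\omega)|r-s|^\lambda,
\end{equ}
uniformly in $y\in G^+$, for some random variable $G$ with moments of all orders.

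Feeding this bound into BDG yields
\begin{equ}
\E|Z_{s,t}(y)|^p\leq C_p\,\E\Bigl(\int_s^t G^2|r-s|^{2\lambda}dr\Bigr)^{p/2}\leq N|t-s|^{p(1+2\lambda)/2},
\end{equ}
with $N$ independent of $y$. To upgrade this to a supremum bound, one must further control increments in $(s,t,y)$. For changes in $t$ or $s$, the identity $Z_{s,t}(y)-Z_{s,t'}(y)=-\int_{t'}^t[\sigma^k_r(X_r(y))-\sigma^k_s(X_s(y))]dW^k_r$ (and its analogue in $s$) give, by BDG, estimates of the form $\E|Z_{s,t}(y)-Z_{s,t'}(y)|^p\leq N|t-s|^{p\lambda}|t-t'|^{p/2}$; for changes in $y$, an analogous computation combined with the spatial Lipschitz regularity of $\sigma$ and the spatial regularity of the flow yields $\E|Z_{s,t}(y)-Z_{s,t}(y')|^p\leq N|t-s|^{p(1+2\lambda)/2}|y-y'|^p$.

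These increment estimates put us in a standard Kolmogorov/dyadic situation on the $(d+2)$-parameter set $[0,T]^2\times G^+$: at scale $2^{-n}$, replace $(s,t,y)$ by a rounded dyadic approximant $(s_n,t_n,y_n)$, use a union bound over the $O(2^{(d+2)n})$ grid points via Chebyshev at large $p$ to exploit the factor $|t-s|^{p(1+2\lambda)/2}$, and sum the rounding errors in a geometric series. This produces, for any $\eps>0$ and $p<\infty$, the required moment bound on $\sup_{s,t,y}|Z_{s,t}(y)|/|t-s|^{(1+2\lambda)/2-\eps}$ with loss of an $\eps$ coming from the entropy cost. The main obstacle is bookkeeping: keeping all the pieces (temporal Hölder norm of $\sigma$, spatial Lipschitz norm of $\sigma$, Hölder norms of the flow $X$ and of $\nabla X$) simultaneously controlled in high $L^p(\Omega)$ so that the one-point and two-point estimates combine to feed Kolmogorov in all three directions at once.
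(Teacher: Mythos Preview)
Your proposal is correct and shares the paper's core idea: write the quantity as a stochastic integral with integrand of order $|r-s|^\lambda$, apply BDG, then chain. The packaging differs in two instructive ways.

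First, the paper does not run Kolmogorov jointly in $(s,t,y)$. It takes the value space to be $V=\cC(G^+,\R^d)$, controls $\E\sup_y|\cdot|^p$ by a preliminary Kolmogorov-in-$y$ step at each fixed $(s,t)$, and then invokes a dedicated two-parameter Kolmogorov criterion (Lemma~\ref{lem:Kolm} in the appendix) designed for families $D_{s,t}$ satisfying only approximate additivity $|D_{s,t}|\le|D_{s,r}|+|D_{r,t}|+|E_{s,r,r,t}|$, with separate moment hypotheses on $D$ and $E$. This is exactly the structure here, since $Z_{s,t}=Z_{s,r}+Z_{r,t}+(\sigma_s(X_s)-\sigma_r(X_r))(W_t-W_r)$. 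Your increment bound in $t$ carries the factor $|t-s|^{p\lambda}|t-t'|^{p/2}$, which does \emph{not} depend on $|t-t'|$ alone, so the situation is not literally ``standard Kolmogorov''; the dyadic argument you sketch is implicitly reproving the appendix lemma. Both routes work; the paper's lemma just isolates the mechanism once and for all.

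Second, a small correction: your $y$-increment bound cannot keep the full product $|t-s|^{p(1+2\lambda)/2}|y-y'|^p$. The mixed difference $\sigma_r(X_r(y))-\sigma_s(X_s(y))-\sigma_r(X_r(y'))+\sigma_s(X_s(y'))$ has to be interpolated between its $|r-s|^\lambda$ and its $|y-y'|^{1-\eps'}$ bounds, yielding something like $|r-s|^{\lambda(1-\eps)}|y-y'|^{(1-\eps')\eps}$ (which is what the paper uses). Any positive power in $|y-y'|$ suffices, so this does not affect the conclusion.
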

\begin{proof}
We apply Lemma \ref{lem:Kolm} with $V=\cC(G^+,\R^d)$, 
\begin{equs}
D_{s,t}&=X_{t}(\cdot)-X_{s}(\cdot)+\sigma^k_{s}(X_{s}(\cdot))(W^k_{t}-W^k_{s}),
\\
E_{s,s',t,t'}&=(\sigma^k_s(X_s(\cdot))-\sigma^k_{s'}(X_{s'}(\cdot)))(W^k_t-W^k_{t'}).
\end{equs}
Condition \eqref{eq:Kolm cond 1} is clearly satisfied. As for the bounds \eqref{eq:Kolm cond 2}, first, using also the usual version of the Kolmogorov continuity theorem, we can write, for any $p\geq 1$, (up to constants depending on $p$, $d$, and $G$)
\begin{equs}
\E&|D_{s,t}|^p=\E\Big|\sup_{y\in G^+}\int_s^t\sigma^k_r(X_r(y))-\sigma^k_s(X_s(y))\,dW^k_r\Big|^p
\\
&\leq\E\Big|\int_s^t\sigma^k_r(X_r(y_0))-\sigma^k_s(X_s(y_0))\,dW^k_r\Big|^p
\\
&\quad+\sup_{y,y'\in G^+}\frac{\E\Big|\int_s^t\sigma^k_r(X_r(y))-\sigma^k_r(X_r(y'))-\sigma^k_s(X_s(y))+\sigma^k_s(X_s(y'))\,dW^k_r\Big|^p}{|y-y'|^{d+1}}
\\
&\leq\E\Big|\int_s^t|\sigma_r(X_r(y_0))-\sigma_s(X_s(y_0))|^2\,dr\Big|^{p/2}\label{eq:intermediate0}
\\
&\quad+\sup_{y,y'\in G^+}\frac{\E\Big|\int_s^t|\sigma_r(X_r(y))-\sigma_r(X_r(y'))-\sigma_s(X_s(y))+\sigma_s(X_s(y'))|^2\,dr\Big|^{p/2}}{|y-y'|^{d+1}},
\end{equs}
where $y_0\in G^+$ is arbitrary. Fix $\eps'\in(0,1/2-\lambda)$ and denote
\begin{equ}
K=\|\sigma\|_{L_\infty(\Omega\times[0,T],\cC^{1}(\R^d))},\,\,
\eta_1=\|\sigma\|_{\cC^{\lambda}([0,T],\cC^{1}(\R^d))},\,\,
\eta_2=\|X\|_{\cC^{1/2-\eps'}([0,T],\cC^{1-\eps'}(G^+))}.
\end{equ} 
The latter random variable has finite moments of any order due to \cite[Thm~II.2.1]{Kunita_StFleur}. One has
\begin{equs}
|\sigma_r(X_r(y_0))-\sigma_s(X_s(y_0))|&\leq 
\eta_1|r-s|^{\lambda}+|\sigma_s(X_r(y_0))-\sigma_s(X_s(y_0))|
\\
&\leq \eta_1|r-s|^{\lambda}+K|X_r(y_0)-X_s(y_0)|\label{eq:intermediate1}
\\
&\leq \eta_1|r-s|^{\lambda}+K\eta_2|r-s|^{1/2-\eps'}
\\&\leq(\eta_1+KT\eta_2)|r-s|^\lambda.
\end{equs}
As for the other term, first, using the same bound as above, with $y_0$ replaced by $y$ and $y'$,
\begin{equs}
|\sigma_r(X_r(y))-\sigma_s(X_s(y))|+|\sigma_s(X_s(y'))-\sigma_r(X_r(y'))|&\leq 2(\eta_1+KT\eta_2)|r-s|^\lambda
\end{equs}
On the other hand, one also has
\begin{equs}
|\sigma_r(X_r(y))-\sigma_r(X_r(y'))|+|\sigma_s(X_s(y))-\sigma_s(X_s(y'))|\leq 2K\eta_2|y-y'|^{1-\eps'}.
\end{equs}
Therefore we also have
\begin{equ}\label{eq:intermediate2}
|\sigma_r(X_r(y))-\sigma_r(X_r(y'))-\sigma_s(X_s(y))+\sigma_s(X_s(y'))|
\leq
\eta_3|r-s|^{\lambda(1-\eps)}|y-y'|^{(1-\eps')\eps}
\end{equ}
with some random variable $\eta_3$ with finite moments of any order.
Choosing $p$ large enough so that $p(1-\eps')\eps\geq d+1$, the second term on the right-hand side of \eqref{eq:intermediate0} is bounded by a constant times
\begin{equ}
|s-t|^{p(1/2+\lambda(1-\eps))},
\end{equ}
and hence, combining this with \eqref{eq:intermediate1}, we get
\begin{equ}
\E|D_{s,t}|^p\leq C|s-t|^{p(1/2+\lambda(1-\eps))}.
\end{equ}
uniformly in $s$ and $t$. Moving to the second bound in \eqref{eq:Kolm cond 2}, we have
\begin{equ}
\E|E_{s,s',t,t'}|^p\leq|t-t'|^{p/2}\E^{1/2}\sup_{y\in G^+}|\sigma_s(X_s(y))-\sigma_{s'}(X_{s'}(y))|^{2p}.
\end{equ}
The second component on the right-hand side can be estimated exactly as above: the only difference is that since one does not integrate in time, there is no factor $|s-s'|^{p/2}$ appearing. One hence has
\begin{equ}
\E|E_{s,s',t,t'}|^p\leq C|t-t'|^{p/2}|s-s'|^{p\lambda(1-\eps)},
\end{equ}
and so one can set $\gamma=(1+2\lambda)/2-\eps$ in Lemma \ref{lem:Kolm}: for large enough $p$ the conditions on the exponents are satisfied and we get the claim.
\end{proof}

We can now prove the desired square root law.
\begin{lemma}\label{lem:sqrt}
Let $\sigma$ satisfy Assumption \ref{as:coeff regularity} and assume $d_1<\infty$. Then, with the flow $X$ given by \eqref{eq:flow X}, for all $c\in(0,\infty)$, almost surely
\begin{equ}
\limsup_{n\rightarrow\infty}\sup_{t\in[0,T]}
\frac{1}{n+1}N_n((\nabla X_t^{-1})^{-1}X^{-1}_\cdot,c,t)\leq\hat\pi(c)
\end{equ}
with a deterministic function $\hat \pi(c)$, that depends only on $K$ and $d_1$, and that converges to $0$ as $c\rightarrow\infty$,
where for each $t$, $(\nabla X_t^{-1})^{-1}X^{-1}_\cdot$ is viewed as a process with values in $\cC(G^+,\R^d)$.
\end{lemma}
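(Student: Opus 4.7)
\emph{Proof plan.} The plan is to show that on small time intervals ending at $t$, the process $s \mapsto (\nabla X_t^{-1}(x))^{-1}X_s^{-1}(x)$ differs from the fixed linear combination of Wiener processes $s \mapsto -\sigma_t^k(x)(W_t^k - W_s^k)$ by a remainder of size $o(|t-s|^{1/2})$. Since oscillations exceeding $c\,2^{-n/2}$ on intervals of length $2^{-n}$ are precisely the quantity counted by $N_n$, this comparison lets us replace the complicated process by the simpler one at the cost of only finitely many ``lost'' scales, independently of $n$. The simpler process is $\R^d$-valued, has coefficients bounded by $K$, and is driven by only $d_1<\infty$ independent Wiener processes, so its oscillations on any interval are controlled by those of $W^1,\ldots,W^{d_1}$ on the same interval, and Theorem \ref{thm:Onemore} then applies coordinatewise.

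The heart of the argument is the approximation
\begin{equ}\label{eq:plan-approx}
(\nabla X_t^{-1}(x))^{-1}(X_s^{-1}(x) - X_t^{-1}(x)) + \sigma_t^k(x)(W_t^k - W_s^k) = R(s,t,x),
\end{equ}
with $|R(s,t,x)| \leq \eta\, |t-s|^{\mu}$ for some $\mu > 1/2$ and an almost surely finite random variable $\eta$, uniformly in $s,t\in[0,T]$ and $x\in G^+$. To establish this, I would apply Lemma \ref{lem:RP} with $y = X_s^{-1}(x)$ and $\lambda$ close to $1/2$, which gives $X_t(X_s^{-1}(x)) = x - \sigma_s^k(x)(W_t^k - W_s^k) + O(|t-s|^{(1+2\lambda)/2-\eps})$. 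Applying $X_t^{-1}$ to both sides and Taylor-expanding $X_t^{-1}$ to first order around $x$ then yields the identity, with remainder consisting of: (a) the propagated error from Lemma \ref{lem:RP}; (b) a second-order Taylor term of size $O(|W_t-W_s|^2) = O(|t-s|^{1-\eps})$; and (c) the discrepancy $(\sigma_s^k(x) - \sigma_t^k(x))(W_t^k-W_s^k) = O(|t-s|^{\nu+1/2-\eps})$ incurred when one swaps $\sigma_s^k(x)$ for $\sigma_t^k(x)$, estimated via Assumption \ref{as:coeff regularity}(b). All three contributions have exponent strictly above $1/2$, and the uniformity in $x$ of the constants and of the moment bounds follows from the $\cC^{3+\nu}$-regularity of $\sigma$ together with classical stochastic flow estimates (Kunita), which also provide $L_p$-control on $\nabla X_t^{-1}(x)$ and its inverse.

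Given \eqref{eq:plan-approx}, if the oscillation of $(\nabla X_t^{-1}(x))^{-1} X_\cdot^{-1}(x)$ on $[t-2^{-n},t]$ exceeds $c\,2^{-n/2}$, then for all $n \geq n_0(\eta,c)$ (finite because $\mu > 1/2$) the oscillation of $\sigma_t^k(x)(W_\cdot^k - W_t^k)$ on the same interval exceeds $(c/2)\,2^{-n/2}$. Since $|\sigma_t(x)\Delta W|_{\R^d} \leq C(d,d_1)\,K \sum_{k=1}^{d_1} |\Delta W^k|$, this forces, for some $k\leq d_1$, an oscillation of $W^k$ exceeding $(c/C')\,2^{-n/2}$ with $C' = C'(K,d,d_1)$. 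Summing over coordinates and over $k\leq d_1$, dividing by $n+1$, taking $\limsup$, and invoking Theorem \ref{thm:Onemore} (which already delivers uniformity in $t$) yields the claim with $\hat\pi(c) = d\,d_1\,\pi(c/C')$, which depends only on $K$, $d$, $d_1$ and tends to $0$ as $c\to\infty$. The main obstacle I anticipate is the inversion step leading to \eqref{eq:plan-approx} with uniform-in-$x$ control on the Taylor remainder: it requires Kunita-type estimates on higher spatial derivatives of $X_t^{-1}$ and careful bookkeeping of the error exponents, but these are available under Assumption \ref{as:coeff regularity}.
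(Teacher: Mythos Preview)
Your proposal is correct and follows essentially the same approach as the paper: use Lemma~\ref{lem:RP} to linearise the forward flow increment, transfer it to the inverse flow via the Taylor identity $X_t^{-1}(x) - X_s^{-1}(x) = \nabla X_t^{-1}(x)\bigl(X_s(X_s^{-1}(x)) - X_t(X_s^{-1}(x))\bigr) + O(A_{s,t}^2)$, and then reduce to Theorem~\ref{thm:Onemore} applied to each of $W^1,\ldots,W^{d_1}$. Two inessential remarks: the paper does not bother swapping $\sigma_s$ for $\sigma_t$ (it just uses $|\sigma|\leq K$, so Assumption~\ref{as:coeff regularity}(b) enters only through Lemma~\ref{lem:RP} with $\lambda=\nu$, not ``$\lambda$ close to $1/2$''), and the extra factor of $d$ in your $\hat\pi$ is not needed---the bound $|\sigma_t(x)\Delta W|\leq K|\Delta W|_{l_2}$ involves only $K$ and $d_1$, so $\hat\pi$ indeed depends only on $K$ and $d_1$ as the statement asserts.
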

\begin{proof}
First note that
\begin{equs}
A_{s,t}:&=\sup_{y\in G^+}|X_s(y)-X_t(y)|\leq K |W_t-W_s|
\\
&\quad+ |t-s|^{(1+\nu)/2}\sup_{s',t'\in[0,T];\,y\in G^+}\frac{|X_{t'}(y)-X_{s'}(y)+\sigma_{s'}(X_{s'}(y))(W_{t'}-W_{s'})|}{|t'-s'|^{(1+\nu)/2}}.
\end{equs}
Denote the second term on the right-hand side by $B_{s,t}$, and let
\begin{equs}
S^i_t(c)&=\{n\in\mathbb{N}:\,\underset{[t-2^{-n},t]}{\text{osc}}W^i_\cdot>c2^{-n/2}\},\quad\quad i=1,\ldots,d_1
\\
S^0_t(c)&=\{n\in\mathbb{N}:\,\sup_{s\in[t-2^{-n},t]}B_{s,t}>c2^{-n/2}\}.
\end{equs}
Since due to Lemma \ref{lem:RP}, $B_{s,t}\leq|t-s|^{(1+\nu)/2} \eta$ for all $s,t\in[0,T]$ with a finite random variable $\eta$, the quantity $\sup_{t\in[0,T]}\# S^0_t(c)$ is also a.s. finite. By Theorem \ref{thm:Onemore},
\begin{equ}
\limsup_{n\rightarrow\infty}\sup_{t\in[0,T]}\frac{1}{n+1}\#(S^i_t(c)\cap[0,n])=\pi(c).
\end{equ}
Note that one has
\begin{equs}
X_t^{-1}(x)-X_s^{-1}(x)&=X_t^{-1}(X_s(X_s^{-1}(x)))-X_t^{-1}(X_t(X_s^{-1}(x)))
\\
&=\nabla X_t^{-1}(x)(X_s(X_s^{-1}(x))-X_t(X_s^{-1}(x)))+C_{s,t}
\end{equs}
where one has the estimates
\begin{equs}
|(X_s(X_s^{-1}(x))-X_t(X_s^{-1}(x)))|\leq A_{s,t},
\end{equs}
\begin{equs}
|C_{s,t}|&\leq \sup_{(t,x)\in Q^+}|\nabla^2X_t^{-1}(x)||X_s(X_s^{-1}(x))-X_t(X_s^{-1}(x))|^2
\\&\leq A_{s,t}^2\sup_{(t,x)\in Q^+}|\nabla^2X_t^{-1}(x)|.
\end{equs}
Hence for all $t\in[0,T]$,
\begin{equ}
\sup_{s\in[t-2^{-n},t]}\sup_{x\in G^+}|(\nabla X_t^{-1}(x))^{-1}(X_t^{-1}(x)-X_s^{-1}(x))|\leq \sup_{s\in[t-2^{-n},t]}(A_{s,t}+A_{s,t}^2\xi)
\end{equ}
with some finite random variable $\xi$. So whenever $n\notin \cup_{i=0}^{d_1}S^i_t(c)$ and $c2^{-n/2}\leq 1/\xi$, 
the right-hand side above is bounded by $2(Kd_1+1)c2^{-n/2}$ for all $s\in[t-2^{-n},t]$, and so setting $\hat\pi(c)=d_1\pi(c/(4Kd_1+4))$ finishes the proof of the lemma.
\end{proof}
\begin{remark}\label{rem:d1}
The square root law above is the only instance in the proof where the assumption $d_1<\infty$ is used.
As mentioned in Remark \ref{rem:elso}, a slight extension is available: for any sequence $(c_i)_{i\in\mathbb{N}}$, if $n\notin S_t:=\cup_{i=1}^{\infty}S^i_t(c_i)$, then one has
\begin{equ}
\sup_{s\in[t-2^{-n},t]}|A_{s,t}-B_{s,t}|\leq 2^{-n/2}\sup_{x\in G^+}\sum_{i=1}^\infty\sigma_t^i(X_t(x))c_i.
\end{equ}
If $\tilde c_i\rightarrow\infty$ sufficiently fast so that $\sum_{i=1}^\infty\pi(\tilde c_i)<\infty$, then the upper density of $S_t$ with the choice $c_i=C\tilde c_i$ can be made arbitrarily small by choosing $C$ to be sufficiently large. 
Hence if the decay of $\sigma^i$ is so fast that $\sum_{i=1}^{\infty}\sigma^i\tilde c_i<\infty$ uniformly over all choice of arguments, then we obtain the square root law as before.
Note however that this decay condition is far stronger than requiring $\sigma$ to be in $l_2$, or even $l_1$.
\end{remark}

Our final lemma, which will essentially allow us to utilize the above square root law, is the following estimate for hitting probabilities.
\begin{lemma}\label{lem:hitting}
Let $p\geq0$ be an integer, $(\bar B_t)_{t\geq 0}$ be a $d$-dimensional Wiener process on a filtered probability space $(\bar\Omega,(\bar\cF_t)_{t\geq0},\bar P)$, and let 
\begin{equ}
\xi_s=\int_0^s b_{s'}\,ds'+\int_0^s a_{s'}\,d\bar B_{s'}
\end{equ}
with $a_t$ and $b_t$ being bounded predictable processes with values in $\R^{d\times d}$ and $\R^d$, respectively. Fix
$c\geq 1$, $r\geq 7c$, denote $Q_r^p:= [0,2^{-p}]\times B_{2^{-p/2}r}$, fix $(t,x)\in Q_r^{p+1}$, and assume that on $\{(s,\omega):\,(t+s,x+\xi_s)\in Q_r^p\}$ the bounds $|b_s(\omega)|\leq C2^{p/2}$ and
\begin{equ}
\delta I\leq a_s(\omega) a_s^*(\omega)\leq \Delta I
\end{equ}
hold, the latter in the sense of positive semidefinite matrices, with some $C,\delta,\Delta>0$. Let furthermore $n\in\R^d$ be a unit vector, and $A\subset Q_r^p$ be a closed set such that $\{(s,y):\langle y,n\rangle\geq c2^{-p/2}\}\cap Q_r^p\subset A$. Finally, set
\begin{equ}
\tau_{t,x}=\inf\{s> 0:\,(t+s,x+\xi_s)\in A\cup\partial Q_r^p\}.
\end{equ}

Then
one has
\begin{equ}\label{eq:prob est0}
P((t+\tau_{t,x},x+\xi_{\tau_{t,x}})\notin A)\leq \gamma(c,r,d,\delta, \Delta,C)<1.
\end{equ}
for some function $\gamma$, depending only on the indicated parameters.
Moreover, for fixed $c,r,d,\Delta$, there exists a $c_0$ such that for sufficiently small $\delta$ one has $1-\gamma>e^{-c_0/\delta}$.
\end{lemma}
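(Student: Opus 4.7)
The plan is to reduce to $p = 0$ via the parabolic rescaling $\eta_s := 2^{p/2}\xi_{2^{-p}s}$, which preserves the drift bound $C$ and ellipticity $\delta I \leq aa^* \leq \Delta I$, places the starting point in $[0,1/2]\times B_{r/\sqrt 2}$ inside $Q_r^0 := [0,1]\times B_r$, and turns the half-space condition on $A$ into $\{(s,y)\in Q_r^0 : \langle y,n\rangle \geq c\} \subset A$. After extending $a,b$ past the exit time $\tau^*$ from $Q_r^0$ so that the bounds hold everywhere (e.g. $a := I$, $b := 0$ outside), Girsanov's theorem applies with $\nu_s := a_s^*(a_s a_s^*)^{-1}(\lambda n - b_s)$ for a constant $\lambda > 0$ to be chosen. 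Since $|\nu|^2 \leq (\lambda+C)^2/\delta$ is bounded, Novikov's condition is trivial and $\E^P[(dQ/dP)^2]\leq \exp((\lambda+C)^2 s_1/\delta)$ for any $s_1 \in (0,1]$; under $Q$ the process reads $\xi_s = \lambda s n + \tilde N_s$ with $\tilde N$ a continuous $Q$-martingale of quadratic variation rate $aa^*$.

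Writing $v^\parallel := \langle n, v\rangle$ and $v^\perp := v - v^\parallel n$, I will show the event
\[
F := \Big\{\sup_{s \leq s_1} |\tilde N^\parallel_s| \leq L,\ \sup_{s \leq s_1}|\tilde N^\perp_s| \leq L'\Big\}
\]
satisfies $Q(F) \geq 1/2$ by Doob's $L^2$ inequality (taking $L^2 \geq 16 \Delta s_1$, $L'^2 \geq 16 d \Delta s_1$) and forces entry into $A$. Choosing $\lambda := (c + r/\sqrt 2 + L)/s_1$ gives $(x+\xi_{s_1})^\parallel \geq c$ on $F$, so the first hitting time $T_c$ of $\{(x+\xi_\cdot)^\parallel \geq c\}$ belongs to $[0,s_1]$. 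For $s \in [0,T_c]$ on $F$ one has $|(x+\xi_s)^\parallel| \leq \max(c, |x^\parallel| + L)$ and $|(x+\xi_s)^\perp| \leq |x^\perp| + L'$, and the Cauchy--Schwarz-type identity
\[
(|x^\parallel|+L)^2 + (|x^\perp|+L')^2 \leq (|x| + \sqrt{L^2+L'^2})^2,
\]
combined with $|x| \leq r/\sqrt 2$ and $c \leq r/\sqrt 2$ (the latter from $r \geq 7c$), yields $|x+\xi_s| < r$ whenever $\sqrt{L^2+L'^2} < r(1 - 1/\sqrt 2)$. Thus on $F$ the process enters $A$ strictly before touching $\partial Q_r^0$, and $(t + \tau_{t,x}, x + \xi_{\tau_{t,x}}) \in A$.

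Transferring back, $P(F) \geq Q(F)^2 / \E^P[(dQ/dP)^2]$ by Cauchy--Schwarz, which is a strictly positive constant depending only on $c, r, d, \delta, \Delta$ once $C_0$ is fixed (any value, e.g.\ $C_0 := 1$, suffices). The main difficulty lies in balancing $s_1$ and $\lambda$: taking $s_1 := \min(1/2,\, \rho^2 r^2 / (16(d+1) \Delta))$ with $L = L' := \rho r$ for a fixed small $\rho < (1 - 1/\sqrt 2)/\sqrt 2$ makes Doob work and keeps the geometric constraint $\sqrt{L^2+L'^2} < r(1 - 1/\sqrt 2)$ satisfied, and then $\lambda = O(r / s_1)$ produces $\lambda^2 s_1 = O(\max(r^2,\, d\Delta / \rho^2))$, bounded in terms of $c, r, d, \Delta$ alone, so the Radon--Nikodym moment stays finite. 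The hypothesis $r \geq 7c$ provides exactly the geometric slack that this argument needs.
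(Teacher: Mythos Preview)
Your proof is correct and takes a genuinely different route from the paper's. The paper reduces to a one-dimensional hitting problem by applying a carefully constructed smooth function $f(y)=\varphi(|\tilde y|)(y_1+\psi(|\tilde y|))$ that maps the half-space $A_c$ to $\{f\leq 0\}$ and the relevant part of $\partial B_r$ to $\{f\geq 1\}$; after It\^o's formula, $\hat\xi_s=f(x+\xi_s)$ is a one-dimensional semimartingale whose diffusion coefficient is bounded below (since $|\nabla f|\geq 1/(c+r)$), and whose drift is made small by choosing $C_0$ small. The conclusion then comes from a hitting estimate for one-dimensional martingales cited from \cite[Lem~3.7]{K_Onemore}. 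Your Girsanov--Doob argument is more elementary and self-contained: it avoids both the comparison function and the external one-dimensional lemma, and it has the pleasant feature that \emph{any} fixed $C_0$ (e.g.\ $C_0=1$) suffices, whereas the paper genuinely needs $C_0$ small to absorb the drift of $\hat\xi$. The trade-off is that your resulting $\gamma$ is presumably quite poor (it carries a factor $\exp(O(\lambda^2 s_1/\delta))$), while the paper's reduction gives sharper control through the one-dimensional estimate.

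One small remark on the geometric step: the displayed Cauchy--Schwarz identity bounds $(|x^\parallel|+L)^2+(|x^\perp|+L')^2$, which handles the case $|x^\parallel|+L\geq c$ but not directly the case $\max(c,|x^\parallel|+L)=c$. The condition $c\leq r/\sqrt2$ alone is not quite enough to close that second case for arbitrary $L,L'$ with $\sqrt{L^2+L'^2}<r(1-1/\sqrt2)$. However, with your actual choice $L=L'=\rho r$ and $\rho<(1-1/\sqrt2)/\sqrt2$, one checks directly that $c^2+(r/\sqrt2+\rho r)^2<r^2$ using $c\leq r/7$, so the conclusion stands; alternatively, simply take $\rho\geq 1/7$, which forces $L\geq c$ and makes the second case vacuous.
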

\begin{proof}
By rotational symmetry we may assume that $n=(-1,0,\ldots0)$ and by Brownian rescaling we may assume $p=0$. 
It is also clear that if $A$ is replaced by $\tilde A_c\cap \bar Q_r^0$, where
\begin{equ}
\tilde A_c=\{(s,y):y\in A_c\}:=\{(s,y):y_1\leq-c\},
\end{equ}
both in the definition of $\tau$ and on the left-hand side of \eqref{eq:prob est0},
 then the left-hand side of \eqref{eq:prob est0} increases, so it suffices to prove the statement for $A=\tilde A_c\cap\bar Q_r^0$. One can also trivially assume $x\notin \text{int} A_c$, since otherwise the left-hand side of \eqref{eq:prob est0} is just 0.

Let $\varphi,\psi$ be smooth functions on $\R$ such that
\begin{equs}
\text{for}\quad |a|&\leq 5/7 r,&\quad\varphi(a)&=\frac{1}{c+\sqrt{r^2-a^2}},&\quad \psi(a)&=c;
\\
\text{for}\quad |a|&\geq 6/7 r,&\quad\varphi(a)&=1,&\quad \psi(a)&=c+1;
\\
\text{for}\quad |a|&\in [5/7 r,6/7 r],&\quad\varphi(a)&\in[\frac{1}{c+\sqrt{r^2-a^2}},1],&\quad \psi(a)&\in[c,c+1].
\end{equs}
Denote $\tilde y:=(y_2,\ldots,y_d)$, and introduce the function
$f(y)=\varphi(|\tilde y|)(y_1+\psi(|\tilde y|)),$
the process 
\begin{equ}
\hat\xi_s:=f(x)+\int_0^s\hat b_{s'}\,ds'+\int_0^s\hat a_{s'}\,d\bar B_{s'}:=f(x+\xi_s),
\end{equ}
and the stopping time
\begin{equ}
\hat\tau:=\inf\{s>0:\,(t+s,\hat \xi_s)\in [0,1]\times\{0,\,1\}\cup \{1\}\times[0,1]\}.
\end{equ}
By construction, on $B_r\setminus \text{int} A_c$, $f$ is nonnegative, and on $\{y:\,|y|=r,\,y_1\geq -c\}$, one has $f(y)\geq1$. Therefore
\begin{equ}\label{eq:inc1}
\{(t+\tau_{t,x},x+\xi_{\tau_{t,x}})\notin \tilde A_c\cap\bar Q_r^0\}\subset
\{(t+\hat\tau,\hat \xi_{\hat\tau})\notin [0,1]\times\{0\}\}.
\end{equ}
Note also that 
\begin{equ}
|\nabla f(y)|\geq|D_1 f(y)|\geq\inf_{a\in \R}\varphi(a)=\frac{1}{c+r},
\end{equ}
and so $|\hat a|^2$ is bounded from below $\tfrac{\delta}{(r+c)^2}$. 
It is also clear from It\^o's formula that there exists a $\hat C=\hat C(r,c,d)$ such that $\sup_{s\in[0,1]}|\hat b_{s}|\leq \hat C(C+\Delta)$. Next, we claim that there exists an $m=m(r,c)<1$ such that for $y\in B_{2^{-1/2}}$, $f(y)\leq m$. Indeed, we can write
\begin{equs}
\max_{y_1^2+|\tilde y|^2\leq r^2/2} f(y)
&=\max_{y_1^2+|\tilde y|^2\leq r^2/2}\frac{y_1+c}{c+\sqrt{r^2-|\tilde y|^2}}
\\&=\max_{y_1^2+|\tilde y|^2= r^2/2}\frac{y_1+c}{c+\sqrt{r^2-|\tilde y|^2}}
\\&=\max_{y_1^2\leq r^2/2}\frac{y_1+c}{c+\sqrt{r^2/2+ y_1^2}}=:\max_{y_1^2\leq r^2/2}g(y_1).
\end{equs}
Trivially $\lim_{y_1\rightarrow\pm\infty}g(y_1)=\pm 1$, so it suffices to show that $g'>0$. Direct calculation shows
\begin{equ}\label{eq:0}
g'(y_1)=\frac{c\sqrt{r^2/2+y_1^2}-cy_1+r^2/2}{\sqrt{r^2/2+y_1^2}(\sqrt{r^2/2+y_1^2}+c)^2}.
\end{equ}
If the numerator were 0 for some $y_1$, that would imply
\begin{equ}
c^2(r^2/2+y_1^2)=c^2y_1^2-cr^2y_1+r^4/4,
\end{equ}
which gives $y_1=\frac{r^2/4-c^2/2}{c}$, but since substituting this back to \eqref{eq:0} gives a positive quantity, we get the claim.

Let us now set $t_0=(1-m)/(2\hat C(C+\Delta))$, so that one has $\sup_{s\in[0,t_0]}|\int_0^s\hat b_{s'}\,ds'|\leq (1-m)/2$. Define
\begin{equs}
\tilde \xi_s:&=f(x)+\int_0^s\hat a_{s'}\,d\bar B_{s'},
\\
\tilde\tau:&=\inf\{s>0:\,(t+s,\tilde \xi_s)\in [0,t+t_0]\times\{-\tfrac{1-m}{2},\,\tfrac{1+m}{2}\}\cup \{t+t_0\}\times[0,1]\}
\end{equs}
and notice that
\begin{equ}\label{eq:inc2}
\{(t+\hat\tau,\hat \xi_{\hat\tau})\notin [0,1]\times\{0\}\}\subset
\{(t+\tilde\tau,\tilde \xi_{\tilde\tau})\notin [0,t+t_0]\times\{-\tfrac{1-m}{2}\}\}.
\end{equ}
The latter event is now in the scope of \cite[Lem~3.7]{K_Onemore}: the process whose hitting time we are considering is a $1$-dimensional continuous martingale with quadratic variation uniformly bounded from below, and the starting point $(t,f(x))$ is strictly separated from the right boundary $[0,1]\times\{\tfrac{1+m}{2}\}$.
From \eqref{eq:inc1}, \eqref{eq:inc2}, and the application of \cite[Lem~3.7]{K_Onemore} one thus has
\begin{equ}
1-P((t+\tau_{t,x},x+\xi_{\tau_{t,x}})\notin A)\geq 
P(\sup_{t\in[0,\tilde t_0]}w_t\leq a\delta^{-1/2},\,
\inf_{t\in[0,\tilde t_0]}w_t\leq b\delta^{-1/2}),
\end{equ}
where $w$ is a 1-dimensional Wiener process, and the numbers $\tilde t_0,a,-b>0$ depend on $r,c,d,\Delta,C$. The right-hand side is clearly positive and the lower bound $e^{-c_0/\delta}$ bound follows from standard properties of the Wiener process.
\end{proof}

\subsection{Proof of Theorem \ref{thm:special}}

Throughout the proof we work with a fixed $\omega\in \Omega$. By linearity, we can assume that $\psi,f\geq0$, and hence also $u,v\geq0$. We will throughout the proof often use the shorthand $z=(t,x)$.

Define $v^\eps$ as the probabilistic solution of \eqref{eq:transformed} on
\begin{equ}
\tilde Q^\eps:=\{z:\,t\in[0,T],(\zeta_{\eps^3}\ast X_{\cdot}(x))_t\in (G+B_\eps)\},
\end{equ}
with initial condition $\psi$ and boundary condition $0$, where $\zeta\in\cC_0^\infty(\R_+)$ and $\zeta_\eps(s)=\eps^{-1}\zeta(\eps^{-1}s)$. 
Simply by the uniform in $x$ $1/2-$ H\"older-continuity in time of $X$,
there exists an $\eps_0=\eps_0(\omega)$ such that for all $0<\eps<\eps_0$, 
one has $\tilde Q\subset \tilde Q^\eps$ and therefore $v\leq v^\eps$. Moreover, $v^\eps$ agrees with the classical solution of \eqref{eq:transformed} with the same initial-boundary conditions and therefore it is continuously differentiable in time and twice continuously differentiable in space on the closure of $\{z:\,t\in [T_0/2,T],\,x\in\tilde Q^\eps_t\}$, where $\tilde Q_t^\eps=\{x:\,(t,x)\in\tilde Q^\eps\}$.

Using Lemma \ref{lem:sqrt} and the notation in Lemma \ref{lem:hitting}, fix a $c_0$ such that $1/2>\hat\pi(c_0)=:1-\hat\pi$ and a $r_0\geq7(2^{1/4}c_0+1)$, and set $\gamma:=\gamma(2^{1/4}c_0+1,r_0,d,\kappa2^{1/2},K2^{1/2},1)\vee(1/2)<1$.
 Since $M(z):=\nabla X_t(x)$ is uniformly continuous and separated away from zero, there exists a $\delta_0=\delta_0(\omega)>0$ such that whenever $|z-z'|\leq \delta_0$, $$
2^{-1/4}I\leq M(z)M^{-1}(z')\leq2^{1/4}I.
$$ 
Take any $\bar t\in[T_0,T]$.
Let $p_1,p_2\ldots,$ be the nonnegative integers such that
\begin{equ}\label{eq:sqrt2}
\sup_{x\in G^+}\sup_{s,t\in[\bar t-2^{-p_i},\bar t]}\big|M(\bar t,X_{\bar t}^{-1}(x))(X^{-1}_t(x)-X^{-1}_s(x))\big|\leq c_02^{-p_i/2},
\end{equ}
and introduce, for integers $j\geq-\log_2 (1\wedge (T_0/2))$,
\begin{equ}
S(j):=\{z:\,t\in[\bar t-2^{-j},\bar t],\, x\in\tilde Q_t,\,d(M(z)x,M(z)\partial\tilde Q_{\bar t})\leq r_0 2^{-j/2}\}
\end{equ}
and $\cM^\eps(j):=\sup_{S_j}|v^\eps|$, where for brevity we suppress the $\bar t$-dependence of these objects.
Of course $(\cM^\eps(j))$ is a decreasing sequence. Suppose now that there exists $\bar t$-independent indices $j_0=j_0(\omega)$ and $j_1=j_1(\eps,\omega)$ such that $j_1\rightarrow \infty$ as $\eps\rightarrow 0$ almost surely and that for all $j_1\geq p_i\geq j_0$
\begin{equ}\label{eq:geom0}
\cM^\eps(p_{i+1})\leq 2^{-p_i}\cK_2+ \gamma \cM^\eps(p_i).
\end{equ}
By iterating the above we get
\begin{equ}
\cM^\eps(p_{i+1})\leq \gamma^{i}\cK_2+\gamma \cM^\eps(p_i)\leq \gamma^{i-j_0}(i\cK_2+ \cM^\eps(0))\leq C\bar\gamma^{i}\cK_2(2+T).
\end{equ}
with $\bar \gamma=\gamma/2+1/2$ and some $C=C(\gamma,\omega)$. Denote by $j_2=j_2(\omega)$ the index such that
for all $j\geq j_2$, 
\begin{equ}\label{eq:dens}
\#\{i:\, p_i\leq j\}\geq j\hat\pi/2,
\end{equ}
which exists and does not depend on $\bar t$ by the definition of $\hat\pi$. We therefore obtain, for $j_1\geq j\geq j_2$
\begin{equ}
\cM^\eps(j)\leq \cM^\eps(p_{j\hat\pi/2})\leq C(2+T)\cK_2(\bar\gamma^{\hat\pi/2})^{j}.
\end{equ}
Denote $\hat C=C(2+T)$, $\hat\gamma=\bar\gamma^{\hat\pi/2}$. Note that for any $x\in G$, with $\bar\mu:=\sup_{z,z'\in Q^+}|M(z)||M^{-1}(z')|$, one has the trivial bound
\begin{equs}
d(M({\bar t},X_{\bar t}^{-1}(x))X_{\bar t}^{-1}(x),M({\bar t},X_{\bar t}^{-1}(x))\partial\tilde Q_{\bar t})&\leq d(x,\partial G)\bar\mu
\\&=r_0 2^{-[2\log_2(\tfrac{r_0}{d(x,\partial G)\bar\mu})]/2}.
\end{equs}
If $2\log_2(\tfrac{r_0}{d(x,\partial G)\bar\mu})> j_2$, choose $\eps\leq\eps_0$ such that $j_1(\eps)\geq2\log_2(\tfrac{r_0}{d(x,\partial G)\bar\mu})$, so that we can write
\begin{equs}
u_{\bar t}(x)=v_{\bar t}(X_{\bar t}^{-1}(x))\leq v^\eps_{\bar t}(X_{\bar t}^{-1}(x))&\leq \cM^\eps(\lfloor 2\log_2(\tfrac{r_0}{d(x,\partial G)\bar\mu}) \rfloor)
\\
&\leq \hat C\cK_2\hat\gamma^{\lfloor 2\log_2(\tfrac{r_0}{d(x,\partial G)\bar\mu}) \rfloor}
\\
&\leq \hat C\cK_2\hat\gamma^{2\log_2\tfrac{r_0}{\bar\mu}-2} d(x,\partial G)^{-2\log_2\hat\gamma}.
\end{equs}
Note that - as claimed in the theorem - the exponent $\alpha:=-2\log_2\hat\gamma>0$ of the decay depends only on $\kappa,K,d,d_1$.
Moreover, the exponential (in $1/\kappa$) lower bound on $\alpha$ follows from the corresponding statement of Lemma \ref{lem:hitting}.
If $2\log_2(\tfrac{r_0}{d(x,\partial G)\bar\mu})\leq j_2$ we can use the trivial bound
\begin{equ}
u_{\bar t}(x)\leq\sup_{\tilde Q}v\leq\cK_2 (T+1)
\leq \cK_2 (T+1)\left(\frac{2^{j_2/2}\bar\mu}{r_0}\right)^\alpha d(x,\partial G)^\alpha.
\end{equ}
Since $\bar t$ was arbitrary, this yields the claim, so it would suffice to prove \eqref{eq:geom0}. By virtually the same argument, it is also enough (and will be more convenient) to prove
\begin{equ}\label{eq:geom}
\cM^\eps(p_{i+2})\leq 2^{-p_i}\cK_2+ \gamma \cM^\eps(p_{i-1}).
\end{equ}

Recall that for any bounded $\cC^1$ domain there exists a function $\eps_G(\alpha):\,(0,1)\rightarrow (0,\infty)$ such that for any $\alpha\in(0,1)$ and $x\in\partial G$ one has $B_{\eps_G(\alpha)}(x)\cap\{y:\,\langle\tfrac{y-x}{|y-x|},n_x\rangle\geq \alpha\}\subset G^c$, where $n_x$ is the normal derivative of $\partial G$ at $x$. Let then $j_0$ be the smallest integer such that for all $j\geq j_0$  
\begin{enumerate}[(a)]
\item 
$2r_02^{-j/2}\tilde\mu\leq1/(32r_0)$, where $\tilde\mu=\sup_{z,z'\in Q^+}|\nabla^2X_t(x)||M^{-1}(z)|^2$,
\item $2\bar\mu r_0\bar 2^{-j/2}\leq\eps_G(1/16r_0)$,
\item $2^{-j}+\sup_{|s-t|\leq 2^{-j+1}}\sup_{y\in\R^d}| X_t^{-1}X_sy-y|+ \bar\mu r_02^{-j/2}\leq\delta_0,$
\item $\bar\mu\sup_{(t,x)\in \tilde Q^+}|\beta_t(x)|\leq
2^{-j/2}$.
\end{enumerate}
That is of course equivalent to saying that $j_0$ is the smallest integer that satisfies (a)-(d). Clearly, $j_0$ has no dependence on $\bar t$.

Fix now $i$ such that $p_i\geq j_0$ as well as $0<\eps<\eps_0$ and fix also $z_0=(t_0,x_0)\in S(p_{i+2})$. 
Let $z'=({\bar t},x')$, where $x'\in\partial\tilde Q_{\bar t}$ is a minimizer of the distance between $M(z_0)x_0$ and $M(z_0)\partial \tilde Q_{\bar t}$.
Recall the definition of the flow $U$ from \eqref{eq:flow U} and introduce,
with $M_0:=M(z')$
\begin{equ}
Q_0:=\{z:\,t\in[t_0-2^{-p_i},t_0],\,d(M_0x,M_0x')\leq r_02^{-p_{i}/2}\}
\end{equ}
\begin{equ}
A_0:=Q_0\setminus\tilde Q^\eps
\quad
\tau_0:=\sup\{s<t_0:\,(s,U_{t_0,s}(x_0))\in A_0\cup\partial Q_0\}.
\end{equ}
One has $z_0\in Q_0$, in fact, even
\begin{equ}\label{eq:00}
|M_0x_0-M_0x'|\leq 2^{1/4} |M(z_0)x_0-M(z_0)x'|\leq r_02^{-p_{i+2}/2+1/4}\leq r_02^{-(p_{i+1})/2}.
\end{equ}
Since $v^\eps$ is sufficiently smooth on the closure of $Q_0\cap \tilde Q^\eps$, by It\^o's formula one has
\begin{equs}
v^\eps_{t_0}(x_0)&=\E^{\hat P}\left(\int^{t_0}_{\tau_0} \varphi_s(U_{t_0,s}(x_0))\,ds+v^\eps_{\tau_0}(U_{t_0,\tau_0}(x_0))\right).
\end{equs}
If $z_{\tau_0}:=(\tau_0,U_{t_0,\tau_0}(x_0))\in A_0$, then $v^\eps(z_{\tau_0})=0$. If however $z_{\tau_0}\notin A_0$, then we claim that $z_{\tau_0}\in S(p_{i-1})$.
Indeed, first note that since one cannot exit $\tilde Q$ without exiting $\tilde Q_\eps$ first, if $z_{\tau_0}\notin A_0$, then one has $U_{t_0,\tau_0}(x_0)\in\tilde Q_{\tau_0}$.
Next, by property (c) of $j_0$, one has $|z_{\tau_0}-z'|\leq \delta_0$,
and hence
\begin{equs}
d(M(z_{\tau_0})U_{t_0,\tau_0}(x_0), M(z_{\tau_0})\partial\tilde Q_{\bar t})
&\leq 2^{1/4}
d(M_0U_{t_0,\tau_0}(x_0), M_0\partial\tilde Q_{\bar t})
\\
&\leq 2^{1/4}
|M_0U_{t_0,\tau_0}(x_0)- M_0x'|
\\
&\leq
2^{1/4}r_02^{-p_i/2}\leq r_02^{-p_{i-1}/2}.
\end{equs}
As for the time-coordinate, one simply has
\begin{equ}
\tau_0\geq t_0-2^{-p_i}\geq \bar t-2^{-p_i}-2^{-p_i}\geq \bar t-2^{-p_{i-1}},
\end{equ}
as required.
Hence,
\begin{equ}
v^\eps_{t_0}(x_0)\leq 2^{-p_i}\cK_2+\hat P((\tau_0,U_{t_0,\tau_0}(x_0))\notin  A_0)\cM^\eps(p_{i-1}).
\end{equ}
We now want to estimate the probability appearing on the right-hand side by $\gamma$, which is indeed enough to infer \eqref{eq:geom}. First let us transform the whole space by $M_0$:
\begin{equ}
Q_1:=(\text{id},M_0)Q_0,
\quad
 A_1:=(\text{id},M_0) A_0,
\end{equ}
and note that $\tau_0=\sup\{s<t_0:\,(s,M_0U_{t_0,s}(x_0))\in A_1\cup\partial Q_1\}$.

Let us now apply Lemma \ref{lem:hitting}
 with the following choice of parameters: 
\begin{itemize}
\item $p=p_i,\;\;r=r_0,\;\;c=2^{1/4}c_0+1$
\item $(\bar B_t)_{t\geq 0}=(B_{t_0-t}-B_{t_0})_{t\geq 0},\;\;(\bar\Omega,(\bar\cF_t)_{t\geq 0},\bar P)=(\hat\Omega,(\sigma((\bar B_s)_{s\in[0,t]})_{t\geq 0},\hat P)$ 
\item $A=\{(t,x):\,(-t,x)\in  A_1-(t_0,M_0x')\},\;\;$
$n=n_{X_{\bar t}(x')}$
\item $(t,x)=(0,M_0x_0-M_0x')$
\item $\xi_s=M_0U_{t_0,t_0-s}(x_0)-M_0x_0$
$$
=\int_0^sM_0\beta_{t_0-s'}(U_{t_0,t_0-s'}(x_0))\,ds'+\int_0^sM_0\bar\rho_{t_0-s'}(U_{t_0,t_0-s'}(x_0))\,d\bar B_{s'}  
$$
\item $\delta=\kappa2^{1/2},\;\;\Delta=K2^{1/2}$
\end{itemize} 
Let us verify the assumptions of Lemma \ref{lem:hitting}. 
The measurability conditions are satisfied by construction. 
The bound on the drift is satisfied due to property (d) of $j_0$. 
Concerning the bounds on the diffusion, first note that as seen above, property (c) of $j_0$ implies that whenever $(s,U_{t_0,s}(x_0))\in Q_0$, $|z'-(s,U_{t_0,s}(x_0))|\leq\delta_0$, and so
\begin{equs}
M_0\bar\rho_s(U_{t_0,s}(x_0))&=M_0(\nabla X_{s}(U_{t_0,s}(x_0)))^{-1}\rho_s(X_s(U_{t_0,s}(x_0)))
\\
&=M(z')M^{-1}((s,U_{t_0,s}(x_0)))\rho_s(X_s(U_{t_0,s}(x_0))),
\end{equs}
and the definition of $\delta_0$ along with the assumed bounds on $2\bar a=\rho\rho^*$ implies the claimed bounds. The condition on $(t,x)$ is straightforward and follows from \eqref{eq:00}.

As for the condition on $A$, first note that with denoting $\bar x:=X_{\bar t}(x')\in\partial G$, 
\begin{equs}
\cR:&=\{y:\,\scal{y-M_0x',n}\geq 2^{-p_i/2-1}\}\cap B_{2r_02^{-p_i/2}}(M_0x')
\\&\subset
\{y:\,\scal{\frac{y-M_0x'}{|y-M_0x'|},n}\geq 1/(4r_0)\}\cap B_{2r_02^{-p_i/2}}(M_0x')
\\&=
\{y:\,\scal{\frac{y-M_0X_{\bar t}^{-1}\bar x}{|y-M_0X_{\bar t}^{-1}\bar x|},n}\geq 1/(4r_0)\}\cap B_{2r_02^{-p_i/2}}(M_0X_{\bar t}^{-1}\bar x)
\end{equs}
Denoting further $\tilde x:=M_0X_{\bar t}^{-1}\bar x$, since one has $\nabla(X_{\bar t}M_0^{-1})(\tilde x)=I$, each $y$ in the latter set satisfies
\begin{equ}
X_{\bar t}M_0^{-1}y-\bar x=y-\tilde x+e,
\end{equ}
where 
\begin{equ}
|e|\leq |y-\tilde x|^2\tilde \mu\leq|y-\tilde x| 2r_02^{-p_i/2}\tilde\mu\leq|y-\tilde x|/(32 r_0)
\end{equ}
by property (a) of $j_0$. In particular, a very crude application of this bound implies
\begin{equ}
|y-\tilde x|/2\leq|y-\tilde x+e|\leq2|y-\tilde x|,
\end{equ}
and hence
\begin{equs}
\langle\frac{X_{\bar t}M_0^{-1}y-\bar x}{|X_{\bar t}M_0^{-1}y-\bar x|},n\rangle&\geq
\langle\frac{y-\tilde x}{|y-\tilde x+e|},n\rangle-\frac{|e|}{|y-\tilde x+e|}
\\
&\geq\frac{1}{2}\langle\frac{y-\tilde x}{|y-\tilde x|},n\rangle-\frac{1}{16r_0}\geq\frac{1}{16r_0}.
\end{equs}
Hence we can write
\begin{equs}
\cR&\subset
M_0X_{\bar t}^{-1}\Big(\{y:\,\scal{\frac{y-\bar x}{|y-\bar x|},n}\geq(1/16r_0)\}\cap B_{2\bar\mu r_02^{-p_i/2}}(\bar x)\Big)
\\&\subset
M_0X_{\bar t}^{-1}(G^c)
\end{equs}
where for the last inclusion we used property (b) of $j_0$. Let us now take an arbitrary
\begin{equ}
y^*\in\{y:\,\scal{y-M_0x',n}\geq (2^{1/4}c_0+1/2)2^{-p_i/2}\}\cap B_{(r_0+1)2^{-p_i/2}}(M_0x')
\end{equ}
and an $s\in[t_0-2^{-p_i},t_0]$. Denote $\bar y:=X_s M_0^{-1} y^*$ and $\tilde y:=M_0X_{\bar t}^{-1}\bar y$. Then one has
\begin{equs}[eq:01]
\tilde y- y^*&=M_0(X_{\bar t}^{-1}\bar y-X_s^{-1}\bar y)
\\&=
M_0M^{-1}(\bar t,X_{\bar t}^{-1}\bar y)M(\bar t,X_{\bar t}^{-1}\bar y)(X_{\bar t}^{-1}\bar y-X_s^{-1}\bar y).
\end{equs}
We have 
\begin{equs}
|X_{\bar t}^{-1}\bar y-x'|&\leq|X_{\bar t}^{-1} X_sM_0^{-1}y^*-M_0^{-1}y^*|+|M_0^{-1}y^*-x'|
\\
&\leq \sup_{y\in\R^d}|X_{\bar t}^{-1} X_sy-y|+\bar \mu r_02^{-p_i/2}\leq\delta_0
\end{equs}
by property (c) of $j_0$.
Hence, using the defining property \eqref{eq:sqrt2} of $p_i$, from \eqref{eq:01} we get
\begin{equ}
|\tilde y- y^*|\leq 2^{1/4}c_02^{-p_i/2}.
\end{equ}
It follows that $\tilde y\in \cR\subset M_0X_{\bar t}^{-1}(G^c)=M_0\tilde Q_{\bar t}^c$, and so 
$$
y^*=M_0X_s^{-1}X_{\bar t}M_0^{-1}\tilde y\in M_0\tilde Q_s^c.
$$
 Hence
\begin{equs}
\,[t_0&-2^{-p_i},t_0]&
\\&
\times\Big(\{y:\,\scal{y-M_0x',n}\geq (2^{1/4}c_0+1/2)2^{-p_i/2}\}\cap &B_{(r_0+1)2^{-p_i/2}}(M_0x')\Big)
\\&&\subset(\text{id},M_0)\tilde Q^c.
\end{equs}
Let now $j_1=j_1(\eps,\omega)$ be the largest integer such that the Hausdorff distance between $\tilde Q$ and $\tilde Q^\eps$ is smaller than $2^{-j_1-1}$. Then if $p_i\leq j_1$, we get
\begin{equs}
\,[t_0&-2^{-p_i},t_0]&&
\\&\times\Big(\{y:\,\scal{y-M_0x',n}\geq (2^{1/4}c_0+1)2^{-p_i/2}\}\cap
&&B_{r_02^{-p_i/2}}(M_0x')\Big)
\\&&&\subset(\text{id},M_0)(Q_0\setminus \tilde Q^\eps).
\end{equs}
By a simple translation and reflection we get the desired property of $A$.
Also notice that
$\tau_{t,x}=t_0-\tau_0$ and 
\begin{equ}
\{(\tau_0,U_{t_0,\tau_0}(x_0))\notin  A_0\}=
\{(\tau_0,M_0U_{t_0,\tau_0}(x_0))\notin  A_1\}=
\{(\tau_{t,x},x+\xi_{\tau_{t,x}})\notin A\}.
\end{equ}
Applying Lemma \ref{lem:hitting} therefore yields
\begin{equ}
\hat P((\tau_0,U_{t_0,\tau_0}(x_0))\notin  A_0)\leq \gamma
\end{equ}
as claimed and the proof is concluded.
\qed

\begin{appendices}
\section[Appendix]{Appendix}
The following lemma is a variation on Kolmogorov's H\"older-estimate, with the difference being that the two-parameter family we estimate here is not necessarily represented as increments of a (one-parameter) function.
\begin{lemma}\label{lem:Kolm}
Let $(V,|\cdot|)$ be a normed vector space and let $(D_{s,t})_{s,t\in[0,T]}$ and $(E_{s,s',t,t'})_{s,s',t,t'\in[0,T]}$ be two families of $V$-valued random variables, satisfying 
\begin{equs}[eq:Kolm cond 1]
|D_{s,t}|&\leq|D_{s,r}|+|D_{r,t}|+|E_{s,r,r,t}|
\\
|E_{s_1,s_2,s_3,s_4}|&\leq(|E_{s_1,t,s_3,s_4}|+|E_{t,s_2,s_3,s_4}|)\wedge(|E_{s_1,s_2,s_3,t}|+|E_{t,s_2,t,s_4}|)
\end{equs}
for all choice of arguments. Suppose furthermore that $D$ is almost surely continuous
in both arguments and that for some $p\geq1$, $C>0$, $\alpha, \alpha_1,\alpha_2>0$ the bounds
\begin{equs}[eq:Kolm cond 2]
\E|D_{s,t}|^p&\leq C|s-t|^{\alpha}
\\
\E|E_{s,s',t,t'}|^p&\leq C|s-s'|^{\alpha_1}|t-t'|^{\alpha_2}
\end{equs}
hold uniformly in $s,s',t,t'$. Then, if $0<p\gamma<(\alpha-1)\wedge(\alpha_1+\alpha_2-2)$, then
\begin{equ}\label{eq:Kolm}
\E\Big(\sup_{s\neq t\in[0,T]}|s-t|^{-\gamma}|D_{s,t}|\Big)^p\leq C N(T,\gamma,\alpha,\alpha_1,\alpha_2,p).
\end{equ}
\end{lemma}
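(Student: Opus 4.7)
\emph{Proof plan.} The approach is a Kolmogorov-type dyadic argument, adapted to handle the $E$-corrections by invoking the two subadditivities in tandem. For $n\ge 0$ let $t_n^k:=kT/2^n$ and introduce the dyadic maxima
\[
M_n:=\max_{0\le k<2^n}|D_{t_n^k,t_n^{k+1}}|,\qquad N_{n,m}:=\max_{k,l}|E_{t_n^k,t_n^{k+1},t_m^l,t_m^{l+1}}|.
\]
Summing \eqref{eq:Kolm cond 2} over the $2^n$ (resp.\ $2^{n+m}$) indices gives $\E M_n^p\le C\,2^{n(1-\alpha)}$ and $\E N_{n,m}^p\le C\,2^{n(1-\alpha_1)+m(1-\alpha_2)}$, up to constants depending only on $T,C,p$ and the exponents.

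The heart of the proof is the combinatorial claim that for all dyadic $s<t$ with $T2^{-n_0-1}<t-s\le T2^{-n_0}$,
\[
|D_{s,t}|\le C\Bigl(\sum_{n\ge n_0}M_n+\sum_{n,m\ge n_0}N_{n,m}\Bigr).
\]
To establish this, pick a dyadic $r$ of level $n_0$ inside $[s,t]$ and apply the first line of \eqref{eq:Kolm cond 1}: $|D_{s,t}|\le|D_{s,r}|+|D_{r,t}|+|E_{s,r,r,t}|$. Then telescope $|D_{r,t}|$ along the dyadic approximations $s_n:=\lfloor t2^n/T\rfloor T/2^n\uparrow t$: iterated use of the first subadditivity produces a sum $\sum_{n\ge n_0}|D_{s_n,s_{n+1}}|$ (each term bounded by $M_{n+1}$) plus error terms $|E_{s_n,s_{n+1},s_{n+1},t}|$, while the remainder $|D_{s_n,t}|$ vanishes in the limit by continuity of $D$. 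Each error term has a dyadic first pair but a second pair containing the non-dyadic $t$; the second line of \eqref{eq:Kolm cond 1}, applied iteratively along $s_m\uparrow t$, reduces it to $\sum_{m>n}N_{n+1,m+1}$. The symmetric treatment of $|D_{s,r}|$ (using dyadic approximations of $s$ from above) and of $|E_{s,r,r,t}|$ (applying the second subadditivity to both pairs) contributes only $O(1)$ terms per level pair $(n,m)$, which combines to give the claim.

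Multiplying the claim by $|s-t|^{-\gamma}\le C\,2^{n_0\gamma}$ and taking supremum,
\[
\sup_{s\ne t\ \text{dyadic}}|s-t|^{-\gamma}|D_{s,t}|\le C\Bigl(\sum_n 2^{n\gamma}M_n+\sum_{n,m}2^{n\gamma_1+m\gamma_2}N_{n,m}\Bigr),
\]
where $\gamma=\gamma_1+\gamma_2$ is split so that $p\gamma_i<\alpha_i-1$ for $i=1,2$---this splitting exists precisely because $p\gamma<\alpha_1+\alpha_2-2$---and the bound $2^{n_0\gamma}\le 2^{n\gamma_1+m\gamma_2}$ for $n,m\ge n_0$ is used. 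A weighted H\"older inequality with weights $2^{-\delta n}$ and $2^{-\delta(n+m)}$, combined with the moment bounds above, estimates the $p$-th moment of each sum by a convergent geometric series for $\delta>0$ small enough. Finally, a.s.\ continuity of $D$ allows passing from the supremum over dyadic pairs to the one over all $s\ne t\in[0,T]$, yielding \eqref{eq:Kolm}.

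The main obstacle is the combinatorial step: one must invoke the two subadditivities in the correct order so that, after all reductions, every error term is localized to a pair of dyadic intervals at explicit levels $n$ and $m$, without incurring multiplicative factors in $(n,m)$ that would break the $L^p$-summability of the final step.
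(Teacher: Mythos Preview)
Your proposal is correct and follows essentially the same dyadic Kolmogorov argument as the paper: telescope $D_{s,t}$ along dyadic approximations of $s$ and $t$, use the first subadditivity to produce $M_n$-type terms plus $E$-corrections, then use the second subadditivity to reduce the $E$-corrections to $N_{n,m}$-type terms, and finally sum the moment bounds. The only cosmetic difference is that the paper packages the dyadic maxima into suprema $A=\sup_k 2^{\gamma k}M_k$ and $B=\sup_{k,k'}2^{\gamma_1 k+\gamma_2 k'}N_{k,k'}$ and bounds $\E(A\vee B)^p$ directly by a union-type sum, whereas you keep the sums and close with a weighted H\"older step; both lead to the same convergent geometric series under the same exponent condition.
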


\begin{proof}
We assume without loss of generality $T=1$. Introduce the notations $\cD_k=2^{-k}\mathbb{Z}\cap[0,1]$ and $\cD=\cup_{k=0}^\infty\cD_k$ for the dyadic numbers and note that due to the continuity of $D$, it suffices to take supremum over $s,t\in\cD$ in \eqref{eq:Kolm}. For fixed $s,t\in\cD$ 
let $n\in\mathbb{N}$ be such that $2^{-n-1}\leq|s-t|\leq 2^{-n}$. 
Let $(s_k)_{k\geq n}$ and $(t_k)_{k\geq n}$ be two sequences such that $s_k,t_k\in\cD_k$, $|s_n-t_n|\leq 2^{-n}$, $|s_{k+1}-s_k|\vee|t_{k+1}-t_k|\leq 2^{k+1}$, 
and that for some large enough $N$, $|s_k-s|\vee|t_k-t|=0$ for all $k\geq N$. One then has, due to \eqref{eq:Kolm cond 1},
\begin{equs}
|D_{s,t}|&\leq|D_{s,s_n}|+|D_{s_n,t_n}|+|D_{t_n,t}|+|E_{s,s_n,s_n,t}|+|E_{s_n,t_n,t_n,t}|
\\
&\leq \sum_{k=n}^{N}|D_{s_{k+1},s_k}|+\sum_{k=n}^{N}|E_{s,s_{k+1},s_{k+1},s_k}|+|D_{s_n,t_n}|
\\&\quad +\sum_{k=n}^{N}|D_{t_{k},t_{k+1}}|+\sum_{k=n}^{N}|E_{t_k,t_{k+1},t_{k+1},t}|+|E_{s,s_n,s_n,t}|+|E_{s_n,t_n,t_n,t}|
\\
&=:\sum_{i=1}^7 I_i.
\end{equs}
Clearly each of $I_1$, $I_3$, and $I_4$ is bounded (up to a universal constant) by 
\begin{equ}
2^{-\gamma n}\sup_{k\geq 0}\sup_{r\in \cD_k}|D_{r,r+2^{-k}}|2^{\gamma k}=:2^{-\gamma n}A.
\end{equ}
Choose $\gamma_1, \gamma_2>0$ such that $\gamma_1+\gamma_2=\gamma$ and $p\gamma_i<\alpha_i-1$ for $i=1,2$. 
Then each of $I_2$, $I_5$, $I_6$, and $I_7$ is bounded (up to a universal constant) by
\begin{equ}
2^{-\gamma n}\sup_{k,k'\geq 0}\sup_{\substack{r\in\cD_k \\ r'\in\cD_{k'}}}|E_{r,r+2^{-k},r',r'+2^{-k'}}|2^{\gamma_1 k}2^{\gamma_2 k'}=:2^{-\gamma n}B.
\end{equ}
This can be easily seen, for example in the case of $I_2$ (the other terms can be treated similarly), from
\begin{equs}
I_2\leq\sum_{k=n}^{N}\sum_{k'=k+1}^N|E_{s_{k'+1},s_{k'},s_{k+1}, s_k}|\leq B\sum_{k=n}^{\infty}\sum_{k'=k+1}^\infty 2^{-k'\gamma_1}2^{-k\gamma_2}\leq B2^{-n(\gamma_1+\gamma_2)}.
\end{equs}
Therefore 
\begin{equ}
\E\Big(\sup_{s\neq t\in[0,T]}|s-t|^{-\gamma}|D_{s,t}|\Big)^p\leq 7^p\E(A\vee B)^p,
\end{equ}
and it remains to bound $\E(A\vee B)^p$. Using the bounds \eqref{eq:Kolm cond 2} and the conditions on the exponents, one has, up to constants depending on  $p$ and the exponents,
\begin{equs}
\E&(A\vee B)^p
\\&\leq \E\sup_{k,k'\geq 0}\sup_{\substack{r\in\cD_k \\ r'\in\cD_{k'}}}
|D_{r,r+2^{-k}}|^p2^{\gamma kp}+
|E_{r,r+2^{-k},r',r'+2^{-k'}}|^p2^{\gamma_1 kp}2^{\gamma_2 k'p}
\\
&\leq \sum_{k\geq 0}\sum_{r\in\cD_k}
\E|D_{r,r+2^{-k}}|^p2^{\gamma kp}
+\sum_{k,k'\geq 0}\sum_{\substack{r\in\cD_k \\ r'\in\cD_{k'}}}
\E|E_{r,r+2^{-k},r',r'+2^{-k'}}|^p2^{\gamma_1 kp}2^{\gamma_2 k'p}
\\
&\leq C\sum_{k\geq0}2^k2^{-\alpha k}2^{\gamma k p}+C\sum_{k,k'\geq 0}2^{k+k'}2^{-\alpha_1 k}2^{-\alpha_2 k'}
2^{\gamma_1 kp}2^{\gamma_2 k'p}
\leq C.
\end{equs}
\end{proof}

\end{appendices}

\bibliography{dirichlet}{}
\bibliographystyle{Martin}

\end{document}